\documentclass[12pt]{article}
\usepackage{a4wide}
\usepackage{amsmath,amsfonts,amssymb}
\usepackage{amsthm}
\usepackage{graphicx}

\usepackage{xcolor}

\usepackage{hyperref}
\usepackage{breakurl}
\usepackage[T1]{fontenc}

\usepackage[numbers,sort&compress]{natbib}

\newcommand{\conv}[1]{\mathrm{conv}(#1)}
\newcommand{\leva}[1]{\mathrm{left}(#1)}
\newcommand{\prava}[1]{\mathrm{right}(#1)}
\newcommand{\out}[1]{\mathrm{out}(#1)}
\newcommand{\area}[1]{\mu(#1)}
\newcommand{\blue}[1]{\left\lVert#1\right\rVert_B}
\newcommand{\bluedashed}[1]{\left\lVert#1\right\rVert_{B'}}
\newcommand{\red}[1]{\left\lVert#1\right\rVert_R}
\newcommand{\reddashed}[1]{\left\lVert#1\right\rVert_{R'}}

\newtheorem{theorem}{Theorem}        
            
\newtheorem{lemma}[theorem]{Lemma}

\newtheorem{conjecture}[theorem]{Conjecture}

\theoremstyle{remark}

\begin{document}
	\title{Bicolored point sets admitting non-crossing alternating Hamiltonian paths 
	}

	\author{
		Jan Soukup\footnote{Department of Applied Mathematics, Charles University, Faculty of Mathematics and Physics, Malostransk\'e n\'am.~25, 118 00~ Praha 1, Czech Republic; \texttt{soukup@kam.mff.cuni.cz}. Supported by project 23-04949X of the Czech Science Foundation (GA\v{C}R) and by the grant SVV–2023–260699.  
		}
	} 
	\maketitle              
	\begin{abstract}
		Consider a bicolored point set $P$ in general position in the plane consisting of $n$ blue and $n$ red points. We show that if a subset of the red points forms the vertices of a convex polygon separating the blue points, lying inside the polygon, from the remaining red points, lying outside the polygon, then the points of $P$ can be connected by non-crossing straight-line segments so that the resulting graph is a properly colored closed Hamiltonian path.
		
	\end{abstract}

	\section{Introduction}

	In geometric graph theory it is a common problem to decide whether a given graph can be drawn in the plane on a given point set so that the edges are represented by non-crossing straight-line segments. For example, deciding whether a given general planar graph has a~non-crossing straight-line drawing on a given point set is NP-complete \cite{embedding_NP}. 
	
	There are many interesting unanswered questions when considering bicolored point sets instead (see the comprehensive survey by \citet{Survey21}). We restrict ourselves to drawings of bipartite graphs on bicolored point sets where edges are drawn as non-crossing straight-line segments between points of different colors. 
	This question remains interesting even for paths. Let $B$ and $R$ denote a set of blue points and a set of red points in the plane, respectively, such that $R \cup B$ is in general position, i.e., no three points are collinear. We call a non-intersecting path on $R\cup B$ whose edges are straight-line segments and every segment connects two points of $R\cup B$ of opposite colors, an $\emph{alternating path}$. If such an alternating path connects all points of $R\cup B$, we call it an \emph{alternating Hamiltonian path}. If such an alternating Hamiltonian path shares the first and last vertex (but otherwise is still non-intersecting), we call it a \emph{closed alternating Hamiltonian path}.
	
	If $||R| - |B|| \le 1$ and $R$ can be separated from $B$ by a line, then \citet{straitline} showed that there always exists an alternating Hamiltonian path on $R \cup B$. This fact together with the well-known Ham sandwich theorem implies that if $|R| = |B|$, then there always exists an alternating path on $R \cup B$ connecting at least half of the points. This trivial lower bound on the length of an alternating path that always exists is the best known according to our knowledge. This bound was improved by a small linear factor by \citet{Long_paths} for point sets in \emph{convex} position, i.e., when the points form the vertices of a convex polygon. On the other hand, if we do not assume that $R$ and $B$ are separated by a line, then there are examples where $|R| = |B| \ge 8$ and no alternating Hamiltonian path on $R \cup B$ exists, even if $R\cup B$ is in convex position. Moreover, for $R\cup B$ in convex position with $|R| = |B| = n$, \citet{Necklace} showed that there are configurations where the longest alternating path on $R\cup B$ has size at most $(4-2\sqrt{2})n  + o(n)$.
	
	As we have seen above, an alternating Hamiltonian path does not exist on every point set but it exists if $||R| - |B|| \le 1$ and $R$ can be separated from $B$ by a line. Several other sufficient conditions on the point set where an alternating Hamiltonian path exists are known. \citet{doublechain} looked more closely at configurations where $R$ and $B$ form a double chain. A \emph{convex} or a~\emph{concave chain} is a finite set of points in the plane lying on the graph of a strictly convex or a strictly concave function, respectively. A \emph{double-chain} consists of a convex chain and a concave chain such that each point of the concave chain lies strictly below every line determined by the convex chain and, similarly, each point of the convex chain lies strictly above every line determined by the concave chain. \citet{doublechain} showed that if $||R| - |B|| \le 1$ and each of the chains of the double-chain contains at least one-fifth of all points, then there exists an alternating Hamiltonian path on $R \cup B$. Moreover, they showed that such a path does not always exist if one chain contains approximately 28 times more points than the other.
	
	Another sufficient condition for the existence of an alternating Hamiltonian path was found by \citet{straitline}. They showed that if $\left||R| - |B|\right| \le 1$, the points of $R$ are vertices of a convex polygon, and all points of $B$ are inside this polygon, then there exists an alternating Hamiltonian path on $R\cup B$. There is no other sufficient condition that we know of.
	
	In this paper, we generalize this last result, and by doing so, we extend the known family of configurations of points for which there exists an alternating Hamiltonian path on $R \cup B$. Specifically, we prove the following theorem.
	
	\begin{theorem} \label{mainpolygon}
		Let $R$ be a set of red points and $B$ be a set of blue points such that $R \cup B$ is in general position. Let $P$ be a convex polygon whose vertices are formed by a subset of~$R$. Assume that the remaining points of~$R$ lie outside of $P$, points of $B$ lie in the interior of~$P$, and $\left||R|- |B|\right| \le 1$. Then there exists an alternating Hamiltonian path on $R \cup B$.
	\end{theorem}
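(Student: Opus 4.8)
\medskip
\noindent\textbf{Proof plan.}
The plan is to induct on the number $m$ of red points lying strictly outside $P$. When $m=0$ every red point is a vertex of $P$ and every blue point lies inside $P$, which is exactly the situation settled by \citet{straitline}, so an alternating Hamiltonian path exists; to drive the induction I would first record a \emph{strengthened} form of this base case, asserting that the path may be chosen with one prescribed endpoint --- any vertex of $P$ when $|R|\ge|B|$, and an appropriately chosen blue vertex of $\conv{B}$ when $|R|<|B|$. This kind of freedom is standard in the constructions for line-separated and polygon-enclosed bicoloured sets, and pinning it down is the first task.

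For the inductive step assume $m\ge 1$. Since $\conv{R\cup B}$ then strictly contains $P$, it has a vertex lying outside $P$; as no blue point is a vertex of $\conv{R\cup B}$, this vertex is a red point $r^\ast$ outside $P$. Let $v_L=\leva{r^\ast}$ and $v_R=\prava{r^\ast}$ be the two vertices of $P$ touched by the supporting lines of $P$ through $r^\ast$, so that $r^\ast$ together with the sub-chain of $\partial P$ between them facing $r^\ast$ bounds the region $\out{r^\ast}$ outside $P$. I would pick a blue point $b^\ast$ lying on the part of $\partial\conv{B}$ that faces $r^\ast$ and chosen so that the segment $r^\ast b^\ast$ crosses $\partial P$ exactly once (through the facing chain) and contains no other point of $R\cup B$; then I recurse on $(R\setminus\{r^\ast\},B)$ if $|R|>|B|$, and on $(R\setminus\{r^\ast\},B\setminus\{b^\ast\})$ otherwise. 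In both cases the recursive configuration still consists of the red convex polygon $P$ with the remaining blue points inside and the remaining red points outside, still satisfies the $\pm 1$ balance condition, and has only $m-1$ red points outside $P$; invoking the induction hypothesis with $b^\ast$ (respectively an appropriate vertex of $P$ near $r^\ast$, such as $v_L$ or $v_R$) prescribed as an endpoint yields a path $\pi'$, and I reattach $r^\ast$ --- and $b^\ast$, when it was removed --- by appending the edge $r^\ast b^\ast$ and, if needed, one further edge joining $b^\ast$ to the prescribed endpoint of $\pi'$. Checking colours in the three cases $|R|\in\{|B|-1,\,|B|,\,|B|+1\}$ (using, in one of them, an auxiliary red point placed far outside $P$ to pass to a neighbouring case) then shows that the result is an alternating Hamiltonian path on all of $R\cup B$.

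The hard part is to make this reattachment \emph{non-crossing}: the one or two new segments must avoid each other and every edge of $\pi'$. Because $r^\ast$ is a vertex of $\conv{R\cup B}$ it lies outside $\conv{(R\cup B)\setminus\{r^\ast\}}$, and $b^\ast$ sits on the part of $\partial\conv{B}$ facing it, so $r^\ast b^\ast$ meets no point of $R\cup B$ other than its endpoints; but an edge of $\pi'$ from a vertex of $P$ to a blue point could a priori still cross $r^\ast b^\ast$ in the region between $\partial P$ and $\conv{B}$. Excluding this is what forces the careful, convexity-driven choice of $b^\ast$ and of the prescribed endpoint of $\pi'$, and this verification --- together with establishing the strengthened base case and threading the endpoint prescriptions consistently through the recursion --- is where the bulk of the work lies.
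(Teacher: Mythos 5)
Your plan takes a genuinely different route from the paper, which never peels off outside red points one at a time: instead it partitions the whole plane into $s$ balanced convex regions, one per edge of $P$ (Theorem~\ref{chp01:partition}, proved via a KKM-type argument for triangles plus induction), applies the line-separated result (Theorem~\ref{chp01:straitline}) inside each region, and concatenates the resulting paths into a closed Hamiltonian path. As written, your inductive peeling argument has two gaps that are not routine verifications, and the first seems fatal to the approach without a substantially new idea.

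The main gap is the non-crossing reattachment, which you name but do not address. The segment $r^\ast b^\ast$ must travel from outside $\conv{(R\cup B)\setminus\{r^\ast\}}$, across an edge of $P$, through the region between the boundary of $P$ and the blue points, to $b^\ast$. But $\pi'$ visits every vertex of $P$, and each such vertex is joined in $\pi'$ to one or two blue points strictly inside $P$; those edges sweep through exactly the region $r^\ast b^\ast$ must traverse. Prescribing $b^\ast$ (or $v_L$, $v_R$) as an endpoint of $\pi'$ controls only the combinatorial position of $b^\ast$ in the path, not the geometry of the other $\Theta(n)$ edges near the chord where $r^\ast b^\ast$ enters $P$; nothing in the induction hypothesis prevents an edge of $\pi'$ from a vertex of the facing chain to some blue point from separating $b^\ast$ from that chain. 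Moreover $b^\ast$ must be fixed \emph{before} $\pi'$ is built (it is deleted, or prescribed as an endpoint, in the recursive call), so you cannot choose it afterwards to dodge $\pi'$. This is precisely the obstruction that pushes the paper to a global construction: the partition traps each outside red point in a convex cell together with the points it will be joined to, and non-crossing then follows from disjointness of the cells. The second gap is the ``strengthened base case'': an alternating Hamiltonian path on a red convex polygon with interior blue points and one \emph{arbitrary prescribed} endpoint is not the statement of Theorem~\ref{originalpoly}, is not proved in the cited literature, and for $|R|=|B|$ it is far from clear that an arbitrary blue vertex of $\conv{B}$ can be prescribed; this would need its own proof (as would the auxiliary-point parity trick, which requires the added far-away red point to end up as an endpoint of the constructed path). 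Until both points are resolved the proposal is a plan rather than a proof.
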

	
	When $|R|= |B|$ we even find a closed alternating Hamiltonian path, i.e., a cycle. Hamiltonian cycles were investigated in a similar setting by \citet{cycles}. They proved that if we relax the alternating condition to allow for at most $n-1$ crossings, then we can always find an alternating Hamiltonian cycle, and that this is the minimum number of crossings needed.
	
	\section{Preliminaries and an outline of the proof} \label{chap01}
	By a \emph{polygonal region} we understand a closed, possibly unbounded, region in the plane whose boundary (possibly empty) consists of finitely many non-crossing straight-line segments or half-lines connected into a polygonal chain. A bounded polygonal region is a polygon. A polygon can be defined by an ordered set of its vertices; in that case, we assume that the vertices lie on the boundary of the polygon in the clockwise direction, and we use index arithmetic modulo the number of vertices. A \emph{diagonal} of a convex polygon (or a polygonal region) is any segment connecting two points on the boundary of the polygon. The \emph{convex hull} of a set of points $X$, denoted by $\conv{X}$, is the smallest convex set that contains~$X$. Recall that $B$ and $R$ always denote the set of blue points and the set of red points, respectively. Moreover, $B$ and $R$ are always disjoint, and $R \cup B$ is always in general position.
	
	Our primary result, Theorem~\ref{mainpolygon}, is a generalization of the following theorem proved by \citet{straitline}.
	
	\begin{theorem} [\cite{straitline}] \label{originalpoly}
		Let $R$ be a set of red points and $B$ be a set of blue points such that $R \cup B$ is in general position. Let $R$ form the vertices of the polygon $\conv{R \cup B}$, the points of $B$ lie in the interior of $\conv{R\cup B}$, and $\left||R|- |B|\right| \le 1$. Then there exists an alternating Hamiltonian path on $R \cup B$.
	\end{theorem}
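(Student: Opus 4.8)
\medskip

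\noindent\textbf{Proof plan.} The plan is to reduce Theorem~\ref{originalpoly} to the special case of a \emph{balanced} configuration admitting a closed alternating Hamiltonian path, and then to prove that special case by induction on the size of the polygon. Precisely, I would first isolate the following.

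\medskip

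\noindent\emph{Main Claim.} If $R$ is the vertex set of a convex polygon with at least three vertices, all points of $B$ lie in its interior, and $|R|=|B|$, then $R\cup B$ admits a closed alternating Hamiltonian path.

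\medskip

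\noindent Granting the Main Claim, Theorem~\ref{originalpoly} follows. (Configurations with $|R\cup B|\le 2$ are trivial or excluded by general position, and in all others $|R|\ge 3$, since $R$ is the entire vertex set of $\conv{R\cup B}$.) If $|R|=|B|$, delete one edge from the closed path. If $|R|=|B|+1$, place an auxiliary blue point $\hat b$ in general position in the interior of $\conv{R}$; then $R$ and $B\cup\{\hat b\}$ are balanced, the Main Claim yields a closed alternating Hamiltonian path, and since $\hat b$ is blue its two neighbours there are red, so deleting $\hat b$ leaves a (non-closed) alternating Hamiltonian path with red endpoints. If $|B|=|R|+1$, instead place an auxiliary red point $\hat r$ just outside some edge of $\conv{R}$, close enough that $R\cup\{\hat r\}$ is still exactly the vertex set of its convex hull and $B$ stays inside; a closed path on $R\cup\{\hat r\}\cup B$ loses the red vertex $\hat r$, whose two neighbours are blue, to leave the desired path.

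For the Main Claim I would induct on $k:=|R|=|B|$. The base case $k=3$ is a triangle with three interior blue points and should be settled by hand, e.g.\ by listing the blue points in angular order around one vertex. For $k\ge 4$ I would use divide-and-conquer along a diagonal, resting on two ingredients. The first is a \emph{balancing lemma}: there is a diagonal $r_ir_j$ of $\conv{R}$ such that each of the two sub-polygons it bounds --- with red vertices the corresponding arc of $R$ and blue points those of $B$ lying in its interior --- has exactly one more red vertex than interior blue point; in particular each sub-polygon then has between $3$ and $k-1$ red vertices. The second is a \emph{prescribed-endpoints consequence} of the inductive hypothesis: in a convex polygon with $m$ red vertices ($3\le m\le k-1$), $m-1$ interior blue points, and a distinguished boundary edge $e$, there is an alternating Hamiltonian path whose endpoints are exactly the endpoints of $e$. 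One obtains this by inserting an auxiliary blue point in a very thin pocket against $e$, applying the Main Claim to the resulting balanced configuration with $m<k$ red vertices, and deleting the auxiliary point --- the pocket being made thin enough that the auxiliary point's two neighbours in the closed path are forced to be the endpoints of $e$. Applying this consequence to the edge $r_ir_j$ inside each of the two sub-polygons from the balancing lemma gives two alternating paths, each from $r_i$ to $r_j$; since the sub-polygons have disjoint interiors and neither path meets the chord $r_ir_j$ except at $r_i$ and $r_j$, their union is a simple closed curve and a properly coloured Hamiltonian cycle on $R\cup B$ (every vertex has degree two, the union is connected, and $r_i,r_j$ each acquire two blue neighbours).

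I expect the balancing lemma, together with the bookkeeping that makes the induction well-founded, to be the main obstacle. The balancing lemma should come from a discrete intermediate-value argument --- rotate the diagonal around the polygon and track the ``red minus blue'' imbalance of one side --- but that imbalance can jump when the rotating diagonal sweeps past a triangle holding several blue points, so some care (or a cleverer rotation schedule) is needed to guarantee the value $0$ is attained by an honest diagonal whose two sides both have between $3$ and $k-1$ red vertices. As for well-foundedness, the prescribed-endpoints consequence is proved by \emph{adding} a point, so one must check that every instance it is invoked on reduces to a Main-Claim instance with strictly fewer than $k$ red vertices --- which holds precisely because the balancing lemma outputs sub-polygons with between $3$ and $k-1$ red vertices. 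Finally, two smaller technical points: verifying the ``thin pocket forces the neighbours'' assertion, which relies on the fact that a non-crossing closed alternating Hamiltonian path must visit the red vertices in their convex-position cyclic order with exactly one blue point between consecutive red vertices, so a blue point squeezed against $e$ cannot be placed in any other gap without an edge of the path crossing an edge incident to the blue point in the gap $e$; and checking that all auxiliary points can be chosen so that every sub-configuration stays in general position.
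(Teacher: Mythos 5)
Your overall architecture is not unreasonable, and several of its components do work: the reduction from the $\left||R|-|B|\right|\le 1$ cases to the balanced closed case via auxiliary points is fine, and the ``thin pocket'' argument for prescribing the endpoints is essentially correct (a simple closed alternating tour must visit the hull vertices in their cyclic order with exactly one blue point per gap, and an auxiliary blue point chosen inside every triangle $r_ibr_{i+1}$ with $b\in B$ is forced into the gap of $e=r_ir_{i+1}$). But the step you yourself flag as the main obstacle, the balancing lemma, is not merely delicate --- it is false, and no cleverer rotation schedule can rescue it. Take $k=4$: let $R$ be the four vertices of a square and cluster all four blue points inside one of the four small triangles cut out by the two diagonals (say the one adjacent to the edge $r_1r_2$). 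Then each of the two diagonals has all four blue points strictly on one side, so neither splits the configuration into two sub-polygons each having one more red vertex than interior blue point; the same clustering defeats diagonal-based divide-and-conquer for every $k\ge 4$. This is a structural obstruction, not a bookkeeping issue: the convex pieces one actually needs here are ``pinwheels'' around an interior Steiner point, and they cannot in general be produced by chords of $\conv{R}$. A secondary soft spot is your base case: for a triangle with three interior blue points the angular-order assignment around one vertex can already produce a crossing (put one blue point very close to $r_1$ but angularly in the middle; the edge from $r_2$ to it then crosses the edge from $r_1$ to the angularly first blue point), so the base case is itself a genuine instance of the subdivision problem rather than a hand check.

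This is precisely where the known proof, as sketched in Section~\ref{chap01}, takes a different route: instead of recursing along diagonals, it invokes an equitable-subdivision theorem that partitions the polygon in one shot into $|R|$ convex pieces, each containing exactly one edge of the polygon (as a diagonal of the piece) and exactly one blue point, and then joins each blue point to the two endpoints of its edge. The cut curves of such a partition pass through interior Steiner points, which is exactly what evades the counterexample above; the machinery needed to produce them is a continuity (ham-sandwich/KKM) argument of the kind this paper develops in Lemma~\ref{chp01:cases3}. If you want to keep a recursive two-piece structure, you would have to replace your straight diagonal by a two-segment polyline from one red vertex through an interior point to another red vertex, chosen so that both sides are balanced --- at which point you have essentially rebuilt the equitable-subdivision lemma, and your prescribed-endpoints trick becomes unnecessary because each piece of the full subdivision already contains only one blue point.
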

	
	Our improvement lies in the fact that the polygon $P$ can be formed by a subset of $R$ (instead of the whole $R$), whereas the remaining points of $R$ remain outside of $P$. The approach in the proof of Theorem~\ref{originalpoly} in a case when $|R| = |B|$ is to partition the polygon formed by $R$ into convex parts, each containing exactly one edge of the polygon and one blue point from inside the polygon, and then connect by straight-line segments each of the blue points to the vertices of the edge that is inside the same part. In this way, alternating paths of length two are formed inside each part of the partition. Moreover, they share their end vertices, and so together they form a closed alternating Hamiltonian path (this path is non-crossing since each of the small paths lies in its own part of the partition). 
	
	We proceed similarly with only one significant distinction. Namely, we partition the whole plane into convex parts such that every edge of the polygon is a diagonal of one part of the partition, and each part of the partition contains one more blue point than it contains red points (not counting the vertices of the polygon). Inside each of these parts, we find an alternating Hamiltonian path. And these paths together form a closed alternating Hamiltonian path as before.
	
	Before we begin, we introduce some geometric notation needed in our arguments. For a directed line $l$, the closed half-plane to the left of $l$ is denoted by $\leva{l}$, and the closed half-plane to the right of $l$ is denoted by $\prava{l}$. For an edge $e$ of a convex polygon (or polygonal region), the closed half-plane to the side of $e$ that is disjoint with the polygon's interior is denoted by $\out{e}$. For two points $a, b$ in the plane, we denote by $ab$ the segment connecting them. 
	
	For a region $T$ of the plane, $\red{T}$ and $\blue{T}$ denotes the number of red points inside~$T$ and the number of blue points inside~$T$, respectively. For the points on the boundaries of regions, we specify if they belong to the region or not (we want each point to belong to exactly one region of the partition).

	\section{Partitioning theorem }
	For the partitioning of the plane, we prove the following theorem.

	\begin{theorem} \label{chp01:partition}
		Let $P = (p_1, \dots ,p_s)$ be a convex polygon, $B$ be a set of blue points in the interior of $P$, and $R$ be a set of red points outside of $P$ such that ${s=|B| - |R|}$ and $R \cup B \cup \{p_1, \dots p_s\}$ is in general position. Then there exists a partition of the plane into convex polygonal regions $Q_1, \dots, Q_s$ such that each $p_ip_{i+1}$ is a diagonal of $Q_i$ and for every $i$, we have $\blue{Q_i} - \red{Q_i} = 1$ (index arithmetic is modulo $s$). Moreover, every point of $R \cup B$ is counted in exactly one $Q_i$. That is, if a point of $R \cup B$ lies on the common boundary of more $Q_i$'s it is assigned to only one of them.   
	\end{theorem}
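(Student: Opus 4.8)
The plan is to build the partition one region at a time, "sweeping" around the polygon $P$ and peeling off one region $Q_i$ per edge $p_ip_{i+1}$, using a continuity/intermediate-value argument to place each cut so that the balance condition $\blue{Q_i}-\red{Q_i}=1$ holds. Let me describe this in more detail. Fix the edge $p_1p_2$ and extend the diagonal $p_1p_2$ to the full line $\ell$ through $p_1$ and $p_2$. The region $Q_1$ should be a convex polygonal region having $p_1p_2$ as a diagonal; a natural candidate is a "wedge-like" or "slab-like" region bounded on one side by part of $\ell$ (the part containing the interior of $P$ near that edge) and cut off from the rest of the plane by one more half-line or line emanating appropriately. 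I would rotate a cutting half-line anchored at a suitable point (say pivoting about $p_2$, or translating a line parallel to $\ell$) and track the integer-valued quantity $\blue{Q_1}-\red{Q_1}$ as the cut moves; general position guarantees the quantity changes by exactly $\pm 1$ as the cut passes a single point, so by discrete intermediate value I can stop at the moment the difference equals $1$, breaking ties in point-assignment on boundaries as the theorem permits. The key bookkeeping fact making this possible is that the \emph{total} surplus is $\blue{\text{plane}} - \red{\text{plane}} = |B|-|R| = s$, which must be distributed as exactly $1$ per region across $s$ regions.

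First I would handle the "inside" contribution: each region $Q_i$ must contain all the blue points that the diagonal $p_ip_{i+1}$ "sees" — more precisely, I would first partition the interior of $P$ into $s$ convex pieces $P_1,\dots,P_s$, where $P_i$ has $p_ip_{i+1}$ on its boundary, using the standard fan/recursive-bisection idea from Theorem~\ref{originalpoly}; but now I cannot demand one blue point per piece, since the red points outside will also contribute. Instead I would set up the cuts of the interior and the exterior \emph{together}: extend each interior dividing segment out through the boundary of $P$ into the exterior, so that the exterior of $P$ is simultaneously partitioned into $s$ unbounded convex regions, one adjacent to each edge, and $Q_i$ is the union of the interior piece $P_i$ and the exterior piece adjacent to $p_ip_{i+1}$ (one checks this union is convex because the dividing rays leave $P$ "straight", i.e., the interior cut and its exterior extension are collinear, and each $p_ip_{i+1}$ is then genuinely a diagonal of the convex region $Q_i$). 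Convexity of $Q_i$ is the geometric constraint I must respect throughout; it forces the dividing curves between consecutive $Q_i$'s to be single line segments/rays rather than bent chains.

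The core of the argument is then an induction on $s$. For $s=1$ there is nothing to do: $Q_1$ is the whole plane and $\blue{Q_1}-\red{Q_1}=|B|-|R|=1$. For the inductive step, I would peel off $Q_s$ (the region attached to edge $p_sp_1$): choose a ray $r$ from $p_1$ (or from $p_s$) sweeping from the direction of the edge $p_sp_1$ inward across $P$, together with its collinear extension into the exterior on the far side, so that $r$ together with the two edges $p_sp_1$'s supporting line cuts off a convex region $Q_s$ containing $p_sp_1$ as a diagonal; as $r$ sweeps, $\blue{Q_s}-\red{Q_s}$ starts at some value and changes by $\pm 1$ at each point crossing, and I stop it at $1$ (I need to argue the value $1$ is actually attained — this is where I use that at the extreme position the remaining region is nonempty and a parity/monotonicity or a "sweep all the way around" argument forces the target to be hit, or alternatively I sweep a family guaranteeing the endpoint values straddle $1$). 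Removing $Q_s$ leaves a convex polygonal region $Q'$ bounded by the new cut, which plays the role of a convex polygon with $s-1$ marked boundary edges $p_1p_2,\dots,p_{s-1}p_s$ — wait, I must be careful that $Q'$ still has the edges $p_1p_2,\dots,p_{s-1}p_s$ appearing on or inside it appropriately; reformulating the inductive hypothesis so that $P$ need only be a convex \emph{polygonal region} (possibly unbounded) with $s$ designated diagonals, rather than a bounded polygon, fixes this. Inside $Q'$ we have $\blue{Q'}-\red{Q'} = s - 1$, so the hypothesis applies and yields $Q_1,\dots,Q_{s-1}$.

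The main obstacle I expect is precisely the geometric rigidity: ensuring at every stage that the region being peeled off is \emph{convex}, that $p_ip_{i+1}$ remains an honest diagonal of it (both endpoints on the boundary, segment inside), and that the leftover region is again a convex polygonal region of the right combinatorial type so the induction closes — all while the sweeping cut has enough freedom to realize the target balance $1$. Making the sweep both "convexity-preserving" and "surjective onto the needed integer value" simultaneously is the delicate point; I anticipate needing to choose the pivot of the sweeping ray carefully (e.g. always pivoting at a polygon vertex and sweeping the collinear exterior extension in tandem) and to invoke general position to guarantee unit jumps and clean tie-breaking on boundaries. The blue-point/red-point counting itself is then just the discrete intermediate value theorem plus the global count $|B|-|R|=s$.
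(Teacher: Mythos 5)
There is a genuine gap, and it sits exactly where you flagged the "delicate point": the sequential peeling cannot work on the whole plane. If $Q_s$ and the leftover region $Q'$ are both convex and together partition the plane, they must be two half-planes separated by a single line; for $p_sp_1$ to be a diagonal of the half-plane $Q_s$, both $p_s$ and $p_1$ must lie on that line, which forces $Q_s = \out{p_sp_1}$. But $\out{p_sp_1}$ contains no blue points (all of $B$ is inside $P$), so $\blue{Q_s}-\red{Q_s}\le 0$ and the target value $1$ is unreachable. If instead you let $Q_s$ be a genuine wedge (bounded by the line through $p_sp_1$ plus one more ray), its complement is a reflex wedge, so $Q'$ is not convex and your induction does not close; your proposed fix (allowing $P$ to be an unbounded polygonal region) addresses boundedness of $P$, not convexity of $Q'$, which is the actual obstruction. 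More generally, in the final partition the regions near the "far side" of $P$ typically meet at a common interior point, so no single region can be split off from the plane by one straight cut; a one-parameter sweep with the discrete intermediate value theorem is structurally too weak here.

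The paper resolves exactly this by making the first step a simultaneous \emph{three-way} split of the plane: Lemma~\ref{chp01:cases3} uses a disk-substitution plus the Knaster--Kuratowski--Mazurkiewicz lemma to find one point $y$ from which three convex regions with prescribed balances $n_1,n_2,n_3$ emanate — a two-parameter fixed-point argument that achieves two independent balance conditions at once, which no single sweep can do. Only \emph{after} that first split does the paper run a peeling induction (Lemma~\ref{chap01:partiitoninduction}), and even there the single-line cut is available only when the relevant diagonal is "right-partitionable"; otherwise another three-way split via Lemma~\ref{chp01:cases3} is invoked. Moreover, the necessary Conditions~\eqref{cp01:conjectureeq} for that first split can fail (because red points outside $P$ can make the required balances negative), which forces the additional half-line construction in the proof of Theorem~\ref{chp01:partition}. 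Your proposal contains no analogue of the KKM-based three-way split nor of this failure case, and these are the technical heart of the proof; the bookkeeping with $|B|-|R|=s$ and unit jumps at point crossings is correct but is the easy part.
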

	
	We believe that the following stronger conjecture, where the differences $\blue{Q_i} - \red{Q_i}$ are predefined integers satisfying some conditions, holds. We formulate it not only for partitioning of the plane but for partitioning of any convex polygonal region since it will be helpful later.
	\begin{conjecture} \label{cp01:conjecture}
		Let $Q$ be a convex polygonal region, $P = (p_1, \dots ,p_s)$ be a convex polygon inside $Q$, $B$ be a set of blue points in the interior of $P$, and $R$ be a set of red points outside $P$ but inside $Q$ such that $R \cup B \cup \{p_1, \dots p_s\}$ is in general position. Additionally let $n_1, \dots, n_s$ be integers satisfying the following conditions. 
		\begin{enumerate}
			\item $ |B| - |R| = n_1 + \dots + n_s.$
			\item For every nonempty cyclic interval of indices $I$ from $\{1, \dots, s\}$, 
			\begin{equation} \label{cp01:conjectureeq}
				\sum_{i \in I} n_i \ge -\red{Q \cap\bigcup_{i \in I} \out{p_ip_{i+1}}}.
			\end{equation}
		\end{enumerate} 
		Then there exists a partition of $Q$ into convex polygonal regions $Q_1, \dots, Q_s$ such that for every $i$, the segment $p_ip_{i+1}$ is a diagonal of $Q_i$ and $\blue{Q_i} - \red{Q_i} = n_i$. Moreover, every point of $R \cup B$ is counted in exactly one $Q_i$.
	\end{conjecture}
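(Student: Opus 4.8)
The plan is to prove the statement by induction on the number of polygon edges, realizing the partition entirely by straight cuts through the vertices $p_i$. The guiding observation is that, in the desired partition, the boundary between two consecutive regions $Q_i,Q_{i+1}$ is forced to pass through the shared vertex $p_{i+1}$, and that every region is an intersection of $Q$ with half-planes. Hence I will only ever cut $Q$ by straight lines through one or two of the $p_i$. This buys two things for free: each resulting piece is convex (an intersection of convex sets), and every edge $p_ip_{i+1}$ stays an interior chord, i.e.\ a diagonal, of the piece containing it. Thus the geometric side of the conclusion (convexity and the diagonal property) is automatic, in the same spirit as the decomposition underlying Theorem~\ref{chp01:partition}, and the entire difficulty is pushed onto controlling the counts $\blue{Q_i}-\red{Q_i}$.

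To make the induction run I first strengthen the hypothesis to a \emph{chain} version: I allow $P$ to be an open convex polygonal chain $(p_0,\dots,p_s)$ whose endpoints $p_0,p_s$ lie on $\partial Q$, with the blue points on its inner side and the red points (within $Q$) on its outer side, and I ask only for regions $Q_1,\dots,Q_s$, one per edge, with $\blue{Q_i}-\red{Q_i}=n_i$; condition~\eqref{cp01:conjectureeq} is now imposed over \emph{linear} intervals. The base case $s=1$ is immediate: take $Q_1=Q$, note that the single edge $p_0p_1$ is a diagonal of $Q$ because its endpoints lie on $\partial Q$, and observe $\blue{Q}-\red{Q}=|B|-|R|=n_1$. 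For the inductive step I fix an internal vertex $p_k$ and a line $\ell$ through $p_k$ separating the left arm $\{p_0,\dots,p_{k-1}\}$ from the right arm $\{p_{k+1},\dots,p_s\}$ (such lines form an open angular interval through $p_k$). This cuts $Q$ into two convex pieces $S_1,S_2$, the first containing $e_1,\dots,e_k$ and the second containing $e_{k+1},\dots,e_s$, each a chain instance with strictly fewer edges, on which I recurse. Finally I recover the stated closed-polygon theorem by a single initial cut along the line through two vertices $p_a,p_b$, which opens the cycle into two chains; this is the only place the cyclic form of condition~\eqref{cp01:conjectureeq} is used.

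The analytic heart is selecting $\ell$ (equivalently, its angle within the admissible interval) so that $\blue{S_1}-\red{S_1}=N_k:=n_1+\dots+n_k$ while \emph{both} sub-instances inherit condition~\eqref{cp01:conjectureeq}. As $\ell$ rotates through its admissible range, $\blue{S_1}-\red{S_1}$ changes by $\pm1$ each time a point crosses $\ell$, so it sweeps a contiguous block of integers between its two extreme values; I must land on $N_k$. This is a Hall-type feasibility question, so I would run the familiar tight-set dichotomy on the linear intervals: if some proper initial interval $\{1,\dots,k\}$ is tight (equality in~\eqref{cp01:conjectureeq}), I split there, since both halves then inherit their inequalities automatically; if every proper interval has slack, I peel off a single edge, the slack absorbing the unit changes forced by the intermediate-value step. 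The monotone dependence of $\blue{S_1}-\red{S_1}$ on the rotation angle, together with the range guaranteed by~\eqref{cp01:conjectureeq}, is exactly what makes the target attainable.

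I expect the main obstacle to be precisely this coupling of the geometric and combinatorial constraints at each cut: one must exhibit a single line through $p_k$ that simultaneously (i) separates the two arms so that $e_k,e_{k+1}$ fall on the correct sides, (ii) realizes the exact difference $N_k$, and (iii) distributes blue and red so that the interval inequalities survive on \emph{both} sides. Parts (i) and (ii) are, respectively, an angular-range statement and a one-dimensional intermediate-value statement; the real work is (iii), verifying that condition~\eqref{cp01:conjectureeq} is inherited rather than merely weakened (note $\red{S_1\cap\bigcup_{i\in J}\out{p_ip_{i+1}}}\le\red{Q\cap\bigcup_{i\in J}\out{p_ip_{i+1}}}$, so the sub-instance inequalities are genuinely more demanding). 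The route I would pursue is to always cut along a tight interval when one exists, and, in the no-tight-interval case, to argue that the strict slack in every inequality leaves a whole sub-interval of admissible angles, so that some admissible $\ell$ meets the count $N_k$ and preserves all remaining inequalities at once.
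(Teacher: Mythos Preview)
First, note that this statement is labelled a conjecture and the paper explicitly leaves it open; only the case $s=3$ is proved (Lemma~\ref{chp01:cases3}), via the Knaster--Kuratowski--Mazurkiewicz lemma together with a disk-replacement continuity argument, and that case is then used as the engine for the special situation $n_i\equiv 1$ (Theorem~\ref{chp01:partition} via Lemma~\ref{chap01:partiitoninduction}). There is thus no ``paper's proof'' to compare against, and your proposal is an attempt at an open problem.

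The concrete gap is your reduction from the closed polygon to two chains by a single straight line through two vertices $p_a,p_b$. That cut carries no continuous parameter, so no intermediate-value argument is available to hit the target $\sum_{i=a}^{b-1}n_i$ on one side, and already for $s=3$ with $n_1=n_2=n_3=1$ it fails outright: every pair of vertices spans an edge $p_ip_{i+1}$, the side $Q\cap\out{p_ip_{i+1}}$ containing only that edge has no blue points, hence its blue-minus-red count is at most $0$ while you need it to equal $1$. The paper meets exactly this obstruction in the $n_i\equiv 1$ case (proof of Theorem~\ref{chp01:partition}) and is forced into a \emph{three-way} first split through a triangle $p_1p_jp_{j+1}$ using Lemma~\ref{chp01:cases3}; even that does not always succeed directly, and the paper must then cut along a half-line from $p_1$ not passing through any other $p_i$ and work with a non-convex intermediate region. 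Your chain induction is closer in spirit to Lemma~\ref{chap01:partiitoninduction}, and single-line cuts through one $p_k$ do appear there in the easy boundary cases, but the paper's general induction step again invokes the $s=3$ lemma for a three-way split; whether single-line cuts alone suffice for the chain version with arbitrary $n_i$ (your item~(iii), the survival of~\eqref{cp01:conjectureeq} on both sides after the cut) is left entirely unargued in your sketch, and in any event cannot rescue the closed-polygon start of your reduction.
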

	
	\begin{figure}[htb]
		\centering
		\includegraphics[scale=1]{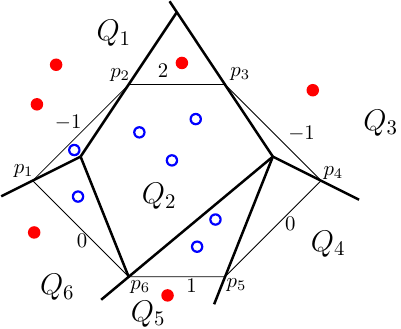}
		\caption[A partition of $Q$ into convex polygonal regions $Q_1,\dots Q_6$]{A partition of $Q$ into convex polygonal regions $Q_1,\dots Q_6$ as in Conjecture~\ref{cp01:conjecture}. Blue points are drawn as circles and red points as disks. Numbers $n_i$'s are written next to their corresponding edges of the polygon.}\label{fig1:conjecture}
	\end{figure}
	
	For an example partition, see Figure~\ref{fig1:conjecture}.

	Note that Conditions~ \eqref{cp01:conjectureeq} are necessary: When $I=\{i\}$, the part $Q_i$ is split by $p_ip_{i+1}$ into two regions, one inside $P$ and one outside of $P$. The part outside of $P$ is inside $\out{p_ip_{i+1}}$, and so $\red{Q_i} \le \red{Q\cap \out{p_ip_{i+1}}}$. Together with a trivial condition $\blue{Q_i}\ge 0$ we get $\blue{Q_i} - \red{Q_i}\ge -\red{Q\cap \out{p_ip_{i+1}}}$, which is exactly one of the condition in Conditions~\eqref{cp01:conjectureeq}. It can be analogously observed for larger cardinalities of  $I$'s. 
	
	Furthermore, note that in the case when $Q$ is the plane and all $n_i$'s are equal to 1, Conditions~\eqref{cp01:conjectureeq} always hold, and the conjecture is equivalent with Theorem~\ref{chp01:partition}.
	
	The case when there are no red points outside of $P$ and every $n_i$ is a positive integer was already proved by \citet{partitioning} and later by \citet{partitioningnew}. The case with points outside of $P$ seems to be more difficult (for example, even some negative $n_i$'s can satisfy Conditions \eqref{cp01:conjectureeq} in that case).
	
	Similar problems of finding partitions of colored point sets into subsets with disjoint convex hulls such that the sets of points of all color classes are partitioned as evenly as possible is well studied, see \cite{equi_partition, parition_to_more}. However, we were not able to apply the results directly because we have the additional restriction that $p_ip_{i+1}$'s have to be diagonals of the convex hulls in the partition.  
	
	We managed to prove Conjecture~\ref{cp01:conjecture} only in a case when $s=3$, but that proved crucial in proving Theorem~\ref{chp01:partition}.
	\begin{lemma} \label{chp01:cases3}
		For $s=3$, Conjecture~\ref{cp01:conjecture} holds.
	\end{lemma}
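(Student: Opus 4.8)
The plan is to set up a continuous sweeping argument. We have a triangle $P = (p_1,p_2,p_3)$ inside a convex polygonal region $Q$; the three lines through the edges $p_1p_2$, $p_2p_3$, $p_3p_1$ partition $Q$ into seven pieces: the triangle $P$ itself, three ``edge regions'' $E_i = Q \cap \out{p_ip_{i+1}}$ adjacent to each edge (but possibly overlapping beyond the triangle's vertices), and three ``corner regions'' near the extensions of the edges. We want to cut $Q$ into three convex regions $Q_1, Q_2, Q_3$ with $p_ip_{i+1}$ a diagonal of $Q_i$ and $\blue{Q_i} - \red{Q_i} = n_i$. The natural shape for such a partition: rotate a ray out of each vertex $p_i$ of the triangle, so that $Q_i$ is bounded by the edge $p_ip_{i+1}$ together with two rays emanating from $p_i$ and $p_{i+1}$ going ``outward'', meeting somewhere. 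Concretely, fix a point $o$ in the interior of $P$ (say its centroid) and consider three rays from $o$ hitting the boundary of $Q$; this already partitions $Q$ into three convex regions $R_1, R_2, R_3$, but they need not have $p_ip_{i+1}$ as a diagonal and the color balance is wrong. So instead I would use a two-stage construction: first handle the part of $Q$ outside $P$, then glue in the triangle.

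First I would prove the statement in the special case $Q = P$ (no red points at all, all $n_i \ge 1$ automatically since $n_1+n_2+n_3 = |B|$), which is exactly the known result of \citet{partitioning} for $s = 3$ — this gives a ``fan'' partition of the triangle from an interior point, each region $Q_i^{\mathrm{in}}$ containing the edge $p_ip_{i+1}$ and exactly $n_i^{\mathrm{in}}$ blue points, where the $n_i^{\mathrm{in}}$ are whatever positive integers we choose summing to $|B|$. Next, for the region $Q \setminus P$, I would partition it into three convex pieces $Q_i^{\mathrm{out}}$, each adjacent to edge $p_ip_{i+1}$, using rays from $p_1, p_2, p_3$: from each $p_i$ shoot one ray into the exterior; choosing the three ray directions appropriately cuts the exterior annulus-like region $Q \setminus P$ into three convex quadrilateral-ish regions, with $Q_i^{\mathrm{out}}$ bounded by (a sub-segment near) $p_ip_{i+1}$ and the two rays from $p_i$ and $p_{i+1}$. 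Then $Q_i := Q_i^{\mathrm{in}} \cup Q_i^{\mathrm{out}}$ is convex (if the rays are collinear with the incoming fan-edges, which we can arrange), has $p_ip_{i+1}$ as a diagonal, and $\blue{Q_i} - \red{Q_i} = n_i^{\mathrm{in}} - \red{Q_i^{\mathrm{out}}}$. We need this to equal $n_i$, i.e. $n_i^{\mathrm{in}} = n_i + \red{Q_i^{\mathrm{out}}} \ge 1$. Here is where Condition~\eqref{cp01:conjectureeq} with $|I| = 1$ is used: it gives $n_i \ge -\red{E_i} $, and since $\red{Q_i^{\mathrm{out}}} \le \red{E_i}$ we are tempted to conclude $n_i^{\mathrm{in}} \ge 1$ — but this is not quite automatic because $n_i^{\mathrm{in}}$ must be a fixed integer and $\red{Q_i^{\mathrm{out}}}$ depends on the ray directions, which are not independent.

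So the heart of the argument is a careful choice of the three exterior ray directions. I would parametrize: let each ray from $p_i$ sweep continuously between ``flush with edge $p_{i-1}p_i$'' and ``flush with edge $p_ip_{i+1}$''. As a ray sweeps, $\red{Q_i^{\mathrm{out}}}$ changes by $\pm 1$ at discrete events (when the ray crosses a red point), and crucially a red point leaving $Q_i^{\mathrm{out}}$ enters $Q_{i-1}^{\mathrm{out}}$ — so $\red{Q_1^{\mathrm{out}}} + \red{Q_2^{\mathrm{out}}} + \red{Q_3^{\mathrm{out}}} = |R|$ always. The goal is to find ray directions and positive integers $n_i^{\mathrm{in}}$ with $n_i^{\mathrm{in}} = n_i + \red{Q_i^{\mathrm{out}}}$ and $\sum n_i^{\mathrm{in}} = |B|$; the sum condition is free since $\sum(n_i + \red{Q_i^{\mathrm{out}}}) = (|B| - |R|) + |R| = |B|$. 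Thus we just need one configuration of rays for which all three quantities $n_i + \red{Q_i^{\mathrm{out}}}$ are simultaneously $\ge 1$. I would argue by a discrete pigeonhole / extremal choice: among all ray configurations, pick one maximizing $\min_i\bigl(n_i + \red{Q_i^{\mathrm{out}}}\bigr)$; if this minimum were $\le 0$, say achieved at index $i$, then $\red{Q_i^{\mathrm{out}}} \le -n_i \le \red{E_i}$, and by rotating the rays bounding $Q_i^{\mathrm{out}}$ outward we can only push red points into $Q_i^{\mathrm{out}}$ from the neighboring regions without decreasing them below what is needed — using the two-element and three-element instances of Condition~\eqref{cp01:conjectureeq} to guarantee there is ``room'' in the neighbors — contradicting maximality. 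Then inside each $Q_i$, refine the interior fan using \citet{partitioning} to place exactly $n_i^{\mathrm{in}}$ blue points in the part that together with the exterior piece forms $Q_i$. Boundary points lying on the cutting rays are assigned by a fixed tie-breaking convention (e.g. each ray's points go to the region clockwise of it), which is consistent because the events are transversal in general position.

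The main obstacle I expect is precisely this combinatorial choice of ray directions: making rigorous the claim that some configuration achieves $n_i + \red{Q_i^{\mathrm{out}}} \ge 1$ for all $i$ simultaneously, and that the extremal/rotational argument really uses all the cyclic-interval conditions (including the $|I| = 2$ cases, which control what happens when a red point must be ``handed off'' between two adjacent regions). A secondary technical point is ensuring the glued regions $Q_i^{\mathrm{in}} \cup Q_i^{\mathrm{out}}$ are genuinely convex — this forces the interior fan point and the three exterior rays to be chosen compatibly (each exterior ray from $p_i$ must be the straight continuation of the corresponding interior fan edge at $p_i$), so the interior partition is not free but slaved to the exterior choice; handling this coupling cleanly, perhaps by choosing the exterior rays first and only then invoking \citet{partitioning} on the triangle with the induced constraints, is the delicate part of the write-up.
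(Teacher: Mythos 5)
Your setup ultimately collapses to the same one-parameter family the paper uses: once you require each glued region $Q_i^{\mathrm{in}} \cup Q_i^{\mathrm{out}}$ to be convex with $p_ip_{i+1}$ as a diagonal, the three exterior rays must be the continuations of three interior fan edges through a \emph{single} apex $y$ in the triangle, so the whole partition is the paper's $(Q^y_1,Q^y_2,Q^y_3)$ determined by one point $y \in P$. This creates two genuine gaps. First, your extremal argument (maximize $\min_i\bigl(n_i + \red{Q_i^{\mathrm{out}}}\bigr)$ and derive a contradiction by ``rotating the rays bounding $Q_i^{\mathrm{out}}$'') treats the three rays as independently adjustable, but they are not: moving $y$ rotates all three rays at once and changes all three counts simultaneously, so ``push red points into $Q_i$ without disturbing the others'' is not an available move. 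The quantity you are optimizing is an integer-valued, discontinuous function on a two-dimensional parameter space, and the step ``if the minimum is $\le 0$ we can improve it'' is exactly the content of the lemma; it is asserted, not proved, and you flag it yourself as the expected obstacle. Second, the two-stage decoupling does not work as described: the result of \citet{partitioning} produces \emph{some} equitable fan of the triangle, not a fan from a prescribed apex, so you cannot first fix the exterior rays and then ask for an interior partition slaved to them (nor fix the interior fan and hope its edges extend to exterior rays with the right red counts).

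The paper closes precisely this gap with a topological argument rather than a discrete extremal one: it thickens each point into a small disk so that the signed measure $\sum_j \area{Q^x_i \cap b'_j} - \sum_j \area{Q^x_i \cap r'_j}$ depends continuously on $x$, defines $F_i = \{x : \text{this quantity} \ge n_i\}$, verifies the Knaster--Kuratowski--Mazurkiewicz covering hypotheses on the triangle $P$ using exactly the cyclic-interval Conditions~\eqref{cp01:conjectureeq} (the $|I|=2$ cases are what cover the vertices $p_i$, the $|I|=1$ cases cover the edges), and extracts a common point $y \in F_1 \cap F_2 \cap F_3$; a final case analysis reassigns the few points whose disks are cut by the lines $yp_i$. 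If you want to salvage your approach, you would need to replace the pigeonhole step with some such continuity or fixed-point argument; as written, the proposal does not establish that a configuration satisfying all three balance conditions simultaneously exists.
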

	
	In the proof, we employ a standard technique (see \citet{points_to_discs}) and substitute points with disks of the same area and work with the area of the disks instead of the number of points. This is helpful because the boundaries of polygonal regions have an area of size zero, and so the area of all disks will be precisely distributed between the interiors of the polygonal regions of the partition. At the end of the proof, we return from disks back to points and we have to solve the problem where to assign points whose disks intersect the boundaries of the partition.
	
	We will also use the following Knaster–Kuratowski–Mazurkiewicz lemma.
	
	\begin{lemma}[Knaster–Kuratowski–Mazurkiewicz lemma]\label{chp01:KKM}~\\
		Let $S = \conv{\{e_1, e_2 , e_{3}\}} \subset \mathbb{R}^{2}$ and $\{F_1, F_2,  F_{3}\}$ be a family of closed subsets of $S$ such that for $A \subseteq \{1, 2, 3\}$, we have 
		
		\[\conv{\{e_i: i \in A\}} \subseteq \bigcup_{i \in A} F_i.\]
		
		Then $\bigcap_{i=1}^{3} F_i$ is compact and non-empty.
	\end{lemma}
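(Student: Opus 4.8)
The plan is to reduce the whole statement to a single application of the Knaster–Kuratowski–Mazurkiewicz lemma (Lemma~\ref{chp01:KKM}) by introducing a two‑parameter family of candidate partitions indexed by a point of $P$. For a point $o$ in the interior of the triangle $P=(p_1,p_2,p_3)$, draw the three rays that start at $o$, pass through $p_1,p_2,p_3$, and continue beyond these vertices. Since $o$ lies strictly inside $P$, consecutive rays enclose an angle smaller than $\pi$, so the three rays cut the plane into three convex wedges; intersecting them with $Q$ gives convex polygonal regions $Q_1(o),Q_2(o),Q_3(o)$, where $Q_i(o)$ is the wedge bounded by the rays through $p_i$ and $p_{i+1}$. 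By convexity $Q_i(o)$ contains the segment $p_ip_{i+1}$, whose endpoints lie on its boundary, so $p_ip_{i+1}$ is a diagonal of $Q_i(o)$, exactly as required. This realises the closed triangle $P$ as the two‑simplex $S=\conv{\{p_1,p_2,p_3\}}$ of the KKM lemma, with $e_i\leftrightarrow p_i$. To make the relevant quantities depend continuously on $o$, I first replace every point of $R\cup B$ by a disk of one common area (as in \citet{points_to_discs}) and work with the signed disk‑area $\varphi_i(o)$ of blue minus red inside $Q_i(o)$, normalised so that each disk has area one; this is continuous in $o$ because region boundaries have measure zero, whereas the raw counts $\blue{Q_i}-\red{Q_i}$ jump when a point crosses a ray.

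Next I take the closed sets $F_i=\overline{\{o\in\operatorname{int}P:\varphi_i(o)\le n_i\}}$ and verify the covering hypotheses of the KKM lemma under the identity labelling. For the full simplex, the first condition $|B|-|R|=n_1+n_2+n_3$ gives $\varphi_1(o)+\varphi_2(o)+\varphi_3(o)=|B|-|R|=n_1+n_2+n_3$ for every $o$, so at least one $\varphi_i(o)\le n_i$, whence $P=F_1\cup F_2\cup F_3$. For the lower‑dimensional faces I use the key geometric degeneration: as $o$ tends to the simplex edge $\conv{\{p_i,p_{i+1}\}}$, which is the polygon edge $p_ip_{i+1}$, the rays through $p_i$ and $p_{i+1}$ become collinear and the wedge $Q_i(o)$ sweeps out to the outer half‑plane, so $Q_i(o)\to Q\cap\out{p_ip_{i+1}}$ and $\varphi_i(o)\to -\red{Q\cap\out{p_ip_{i+1}}}\le n_i$, the last inequality being precisely the singleton case $I=\{i\}$ of Conditions~\eqref{cp01:conjectureeq}. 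Consequently the whole edge $\conv{\{p_i,p_{i+1}\}}$ lies in $F_i$, and approaching the corner $p_i$ along this edge puts $p_i\in F_i$. This checks the KKM condition for every face, and the lemma yields a point $o^{*}\in F_1\cap F_2\cap F_3$.

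The heart of the argument is to show that $o^{*}$ gives a \emph{genuine} solution, namely $\varphi_i(o^{*})=n_i$ for all $i$, rather than a spurious boundary point. If $o^{*}$ lies in the interior of $P$, continuity gives $\varphi_i(o^{*})\le n_i$ for each $i$, and the sum identity forces equality, so we are done. The obstacle is that $o^{*}$ might land on $\partial P$, where the partition degenerates and the measures are discontinuous: a red disk may sit exactly on a ray, so $o^{*}$ enters $F_j$ only through a one‑sided approach while the actual region measures no longer sum to the $n_i$. This is precisely where the full strength of Conditions~\eqref{cp01:conjectureeq} enters, through the two‑element cyclic intervals $I=\{i,i+1\}$, which bound how much red mass the pair $Q_i\cup Q_{i+1}$ can be forced to absorb from $\out{p_ip_{i+1}}\cup\out{p_{i+1}p_{i+2}}$. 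Using them I will argue that no degenerate boundary point can belong to all three $F_i$ unless some singleton inequality is tight; in the tight cases the offending region has already collapsed to the half‑plane $Q\cap\out{p_ip_{i+1}}$ with $\varphi_i=n_i$, and a residual one‑dimensional intermediate‑value cut splits the complementary half‑plane into $Q_{i+1},Q_{i+2}$ with the correct values. I expect this elimination of spurious boundary solutions to be the main difficulty, since it is the only place where the interval conditions for $|I|\ge 2$ are actually used.

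Finally I pass from disks back to points. At $o^{*}$ the only points whose region is ambiguous are the finitely many whose disks meet one of the three rays through $o^{*}$ (or $\partial Q$); after an infinitesimal perturbation of $o^{*}$ that moves every disk centre off the rays, each such disk is split between two cyclically adjacent regions. Because the disk areas already realise the integer values $\varphi_i(o^{*})=n_i$ and the regions sit cyclically around $o^{*}$, the cut points can be assigned one at a time, propagating the rounding around the cycle, so that the resulting counts satisfy $\blue{Q_i}-\red{Q_i}=n_i$ for every $i$ with every point of $R\cup B$ counted exactly once. This produces the partition required by Conjecture~\ref{cp01:conjecture} for $s=3$, establishing Lemma~\ref{chp01:cases3}.
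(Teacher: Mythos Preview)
Your proposal does not address the stated lemma. The statement you were asked to prove is Lemma~\ref{chp01:KKM}, the Knaster--Kuratowski--Mazurkiewicz lemma itself, and the paper does not prove this classical result at all: it merely quotes it and points to \cite[Theorem~5.1]{KMMlemma} for a proof. What you have written is an outline of a proof of Lemma~\ref{chp01:cases3} (the case $s=3$ of Conjecture~\ref{cp01:conjecture}), in which the KKM lemma is \emph{applied}. Viewed as a proof of Lemma~\ref{chp01:KKM}, the proposal is circular---your opening sentence announces the plan to reduce everything to an application of Lemma~\ref{chp01:KKM}---and your closing sentence explicitly says you are ``establishing Lemma~\ref{chp01:cases3}''.

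If one instead reads your write-up as an attempt at Lemma~\ref{chp01:cases3}, it is close in spirit to the paper's own proof (same parametrisation of partitions by a point of $P$, same disk substitution, same KKM framework) but with the inequality reversed: you take $F_i=\overline{\{o\in\operatorname{int}P:\varphi_i(o)\le n_i\}}$ and argue that the edge $p_ip_{i+1}$ lies in $F_i$, whereas the paper takes $F_i=\{x:\varphi_i(x)\ge n_i\}$ and shows that the \emph{opposite vertex} $p_{i-1}$ lies in $F_i$. The paper's orientation makes the boundary analysis painless: after trimming small disks near the vertices, each vertex of $P$ belongs to exactly one $F_i$, so the KKM point $y$ cannot be a vertex, and for every non-vertex $y$ (including points on an edge, where $Q^y_i$ degenerates to $Q\cap\out{p_ip_{i+1}}$) the triple $(Q^y_1,Q^y_2,Q^y_3)$ is still an honest partition with $\varphi_i(y)=n_i$. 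In your version you correctly identify the boundary case as the ``main difficulty'' but the promised elimination of spurious boundary points---via the two-element intervals of Conditions~\eqref{cp01:conjectureeq} and an auxiliary one-dimensional cut---is only announced, not carried out; as written this step is a gap. Likewise, your final disks-to-points assignment (``propagating the rounding around the cycle'') is a sketch where the paper gives an explicit case analysis on how many of the lines $yp_i$ meet a disk.
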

	
	For a simple proof of this lemma, see \cite[Theorem 5.1]{KMMlemma}.	This lemma also holds in an analogous form in higher dimensions, but we need just the planar version.

	\begin{proof} [Proof of Lemma~\ref{chp01:cases3}]
		For a point $x \in P \setminus \{p_1, p_2\}$, define $Q^x_1$ as the convex polygonal region enclosed by the boundary of $Q$ and by the half-lines $xp_1, xp_2$ so that $Q^x_1$ contains edge $p_1p_2$ (or, in a degenerate case when $x$ lies on $p_1, p_2$, define it as $\out{p_1 p_2} \cap Q$). Define $Q^x_2$ and $Q^x_3$ analogously.
		
		Let $\{b_1, \dots, b_{|B|}\} = B$ and $\{r_1, \dots, r_{|R|}\} = R$. We substitute every $b_i$ by a disk $b'_i$ with $b_i$ in its center and substitute every $r_i$ by a disk $r'_i$ with $r_i$ in its center. Furthermore, since the set $R \cup B \cup \{p_1, p_2, p_3\}$ is in general position we can do the substitution so that every disk has a same positive diameter $\varepsilon$, and no line intersects more than two disks or vertices of $P$ simultaneously. Let $\area{T}$ denote the multiple of the area (standard Lebesgue measure) of a region $T$ in the plane such that for each of our disks $d$, we have $\area{d}=1$. 
		
		For all $i$, $1\le i \le 3$, let 
		\[F_i = \left\{x \in P \setminus \{p_i, p_{i+1}\}: \sum_{j=1}^{|B|}\area{Q^x_i \cap b'_j} - \sum_{j=1}^{|R|}\area{Q^x_i \cap r'_j} \ge n_i\right\},\]
		see Figure~\ref{fig2:cases3fi} for an example.	
		
		\begin{figure}[tbh]
			\centering
			\includegraphics[scale=1]{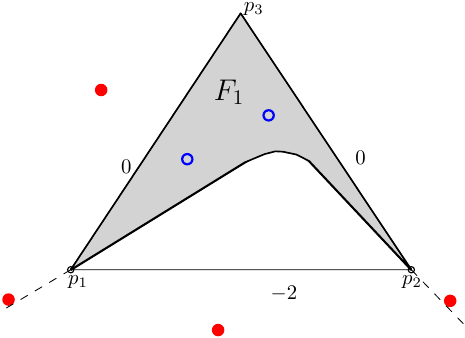}
			\caption{Approximate image of a set $F_1$ used by Lemma~\ref{chp01:KKM} to find a partition of the triangle in Lemma~\ref{chp01:cases3}.}
			\label{fig2:cases3fi}
		\end{figure}
		
		Take $x$ from $P \setminus \{p_1, p_2, p_3\}$. Since $|B| - |R| = n_1 +n_2 + n_3$ and for every such $x$, $(Q^x_1, Q^x_2, Q^x_3)$ is a partition of $Q$, we have 
		\begin{equation} \label{chp01:in1}
			\sum_{i=1}^{3}\sum_{j=1}^{|B|}\area{Q^x_i \cap b'_j} - \sum_{i=1}^{3}\sum_{j=1}^{|R|}\area{Q^x_i \cap r'_j} = |B| - |R| = n_1 + n_2 + n_3.
		\end{equation}
		Therefore, $F_1 \cup F_2 \cup F_3$ covers the interior of $P$.\\
		
		Take $x \in p_1p_2 \setminus \{p_1, p_2\}$. Then $\out{p_1p_2}\cap Q = Q^x_1$ and $\sum_{j=1}^{|B|}\area{Q^x_1 \cap b'_j} = 0$ since all the blue disks are in the interior of $P$. Moreover, by Conditions \eqref{cp01:conjectureeq} and the fact that the line $p_1p_2$ does not cross any disks, we have $\sum_{j=1}^{|R|}\area{Q^x_1 \cap r'_j} \ge -n_1$. Summing this with Equation \eqref{chp01:in1} we have
		
		\begin{align*}
			&\sum_{j=1}^{|B|}\area{Q^x_2 \cap b'_j} + \sum_{j=1}^{|B|}\area{Q^x_3 \cap b'_j} - \sum_{j=1}^{|R|}\area{Q^x_2 \cap r'_j} - \sum_{j=1}^{|R|}\area{Q^x_3 \cap r'_j} \ge n_2 + n_3	.		
		\end{align*}
		Hence, $x \in F_2$ or $x \in F_3$. Therefore, $F_2 \cup F_3$ covers $p_1p_2$. Analogously, we see that $F_1 \cup F_2$ covers $p_3p_1$ and $F_3 \cup F_1$ covers $p_2p_3$.\\
		
		Take $x=p_1$. Then $Q$ can be partition into two polygonal regions, $Q^x_2$ and $Q\cap (\out{p_1, p_2} \cup \out{p_3, p_1})$. Moreover, $Q^x_2$ contains all blue disks. Thus,
		\begin{align*}
			\sum_{j=1}^{|B|}\area{Q^x_2 \cap b'_j} - \sum_{j=1}^{|R|}\area{Q^x_2 \cap r'_j} - & \red{Q\cap (\out{p_1, p_2} \cup \out{p_3, p_1})} = \\&= |B| - |R|
			= n_1 + n_2 + n_3.
		\end{align*}
		
		By Conditions \eqref{cp01:conjectureeq} we have \[\red{Q\cap (\out{p_1, p_2} \cup \out{p_3, p_1})} \ge -n_1-n_3.\]
		Hence, we see that
		\[\sum_{j=1}^{|B|}\area{Q^x_2 \cap b'_j} - \sum_{j=1}^{|R|}\area{Q^x_2 \cap r'_j}  \ge  n_2.\] 
		Therefore, $F_2$ covers $p_1$. Analogously, $F_3$ covers $p_2$ and $F_1$ covers $p_3$.
		
		Furthermore, all sets $F_i$ are closed, except possibly each $F_i$ in points $p_{i}, p_{i+1}$ (index arithmetic is modulo 3). But since $F_{i}$ covers $p_{i-1}$ it has to cover even a small disk around~$p_{i-1}$ because points are in general position. Thus, we can remove this open disk from~$F_{i-1}$ and~$F_{i+1}$. If we do this for all $F_i$, then all of them will be closed and we can apply Lemma~\ref{chp01:KKM}.
		
		Therefore, there exists a point $y \in P$ such that $y \in F_1 \cap F_2 \cap F_3$. The point $y$ is not a vertex of $P$, because each vertex of $P$ is in only one of $F_i$'s. We claim that the polygonal regions $Q^y_1, Q^y_2$, and $Q^y_3$ form the desired partition of $Q$. Clearly, every $p_ip_{i+1}$ is a diagonal of $Q_i^y$. Since the area of the intersection of any two $Q^y_i, Q^y_j$ is zero, then, by Equality \eqref{chp01:in1} and the definitions of $F_i$, for every $i$ we have, 
		\begin{equation} \label{chp01:in2}
			\sum_{j=1}^{|B|}\area{Q^y_i \cap b'_j} - \sum_{j=1}^{|R|}\area{Q^y_i \cap r'_j} = n_i.
		\end{equation} It remains to show that corresponding equalities holds also for points. That is, we need to show that for every $i$ we have
		\begin{equation} \label{chp01:in3}
			\blue{Q^y_i} - \red{Q^y_i} = n_i.
		\end{equation}
		
		Since the points are in general position, every line $yp_i$ crosses at most one disk. Therefore, there are only few possibilities of how the intersection of these lines with disks can look like.
		
		\begin{figure}[htb]
			\centering
			\includegraphics[scale=1]{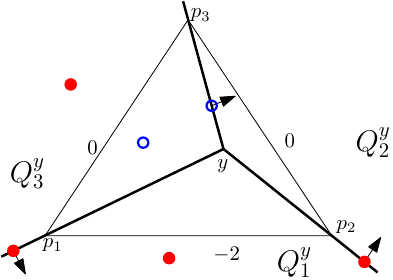}
			\caption{Assignment of points when disks are intersected by boundaries of polygonal regions.}
			\label{fig2:cases3disk}
		\end{figure}
		
		\begin{itemize}
			\item No line $yp_i$ crosses any disks:
			
			In this case every disk is entirely inside some $Q^y_i$. Therefore,  
			\[\sum_{j=1}^{|B|}\area{Q^y_i \cap b'_j} - \sum_{j=1}^{|R|}\area{Q^y_i \cap r'_j} = \blue{Q_i^y} - \red{Q_i^y}\] and the theorem follows.
			
			\item Exactly one line, without loss of generality $yp_1$, crosses some disk:
			
			In this case $\sum_{j=1}^{|B|}\area{Q^y_1 \cap b'_j} - \sum_{j=1}^{|R|}\area{Q^y_1 \cap r'_j}$ is not an integer, a contradiction with \eqref{chp01:in2}.
			
			\item Two lines, without loss of generality $yp_1, yp_{2}$, cross some disks:
			
			In this case $\sum_{j=1}^{|B|}\area{Q^y_2 \cap b'_j} - \sum_{j=1}^{|R|}\area{Q^y_2 \cap r'_j}$ is not an integer, a contradiction with \eqref{chp01:in2}.
			
			\item All three lines $yp_1, yp_{2}, yp_3$, cross some disks:
			
			If two of them, say $yp_1, yp_2$, cross the same disk, then $\sum_{j=1}^{|B|}\area{Q^y_1 \cap b'_j} - \sum_{j=1}^{|R|}\area{Q^y_1 \cap r'_j}$ is not an integer, a contradiction with \eqref{chp01:in2}.
			
			Thus, each of them crosses a different disk, say $yp_i$ crosses $d'_i$ with the colored point $d_i$ in its center. Hence, $Q_1^y$ intersects $d'_1$ and $d'_2$, $Q^y_2$ intersects $d'_2$ and $d'_3$, and $Q_3^y$ intersects $d'_3$ and $d'_1$. Firstly, assume that some $d_i$ does not lie on its corresponding line $yp_i$. Then by Equalities \eqref{chp01:in2} even the centers of the other two $d'_i$ cannot lie on their corresponding lines. Thus, all points of $R \cup B$ are in the interiors of some $Q^y_i$'s. Therefore, if we round to the nearest integer the contribution of every disk $d'_k$ to the value $\sum_{j=1}^{|B|}\area{Q^y_1 \cap b'_j} - \sum_{j=1}^{|R|}\area{Q^y_1 \cap r'_j}$, we obtain the value $\blue{Q^y_1} - \red{Q^y_1}$. Furthermore, in $Q_1^y$ only disks $d_1'$ and $d_2'$ could have changed their contribution. Thus, we could not have changed the value by $1$ or more by the rounding. Therefore,
			\[\sum_{j=1}^{|B|}\area{Q^y_1 \cap b'_j} - \sum_{j=1}^{|R|}\area{Q^y_1 \cap r'_j} = \blue{Q_1^y} - \red{Q_1^y}\]
			since both sides are integers.
			Analogously, the same holds for $Q_2^y$ and $Q_3^y$, and so Equations \eqref{chp01:in3} are satisfied.

			It remains to solve the case when each $d_i$ lies on its corresponding line $yp_i$. In this case, each $d_i$ lies on the boundaries of two $Q^y_i$'s, and we have to assign each of them to exactly one $Q^y_i$ such that Equations \eqref{chp01:in3} would hold. If all $d_i$ have the same color, we assign one to every $Q^y_i$. If two of them, say $d_1$ and $d_2$ are red, and the remaining one, $d_3$ in our case, is blue we assign $d_2$ and $d_3$ to $Q_2^y$ and we assign $d_1$ to $Q_1^y$ (see Figure~\ref{fig2:cases3disk}). The case for two blue and one red is analogous. With these assignments we have
			\[\sum_{j=1}^{|B|}\area{Q^y_i \cap b'_j} - \sum_{j=1}^{|R|}\area{Q^y_i \cap r'_j} = \blue{Q_i^y} - \red{Q_i^y}\]
			for every $i$. Hence, Equations \eqref{chp01:in3} are satisfied.
		\end{itemize}

	\end{proof}

	\begin{sloppypar}

        To prove Theorem~\ref{chp01:partition}, we use induction on the number of vertices of the polygon~$P$. The main idea is to find a suitable triangle formed by vertices of $P$, and apply Lemma~\ref{chp01:cases3} to this triangle. We set the numbers $n_1, n_2, n_3$ so that we obtain three polygonal regions $Q_1$, $Q_2$ and $Q_3$ each containing $\blue{Q_i} - \red{Q_i}$ edges of $P$. Then we partition $Q_1$, $Q_2$ and $Q_3$ by induction. Note that except for the very first step, we are partitioning bounded polygonal regions instead of the plane, and the polygon $P$ is already partially split but that is only easier. 

		
		Unfortunately, as we will see later, at the beginning of this process, Conditions \eqref{cp01:conjectureeq} in Lemma~\ref{chp01:cases3} pose a problem and finding the triangle is not always possible. But for now, we present the main induction argument in full details.
		
	\end{sloppypar}
	
	\begin{lemma} \label{chap01:partiitoninduction}
		\begin{sloppypar}
			Let $Q$ be a convex polygonal region with a boundary and $a$ be a point on this boundary. Additionally, let $s\ge1$ be an integer and $p_1, p_2, \dots, p_{s+1}$ be points inside $Q$ such that $p_1$ and $p_{s+1}$ lie on the boundary of $Q$, $a$ lies in $\prava{p_1p_{s+1}}$, and $(a, p_1, \dots, p_{s+1})$ is a convex polygon inside $Q$ (we also allow cases when either $p_1 = a$, or $p_{s+1} = a$). Let $P$ be the convex polygon enclosed by $p_1p_2,  \dots ,p_sp_{s+1}$ and by the part of the boundary of $Q$ from $p_{s+1}$ through $a$ to $p_1$ in clockwise direction. Moreover, let $B$ be a set of blue points inside $P$, and $R$ be a set of red points outside $P$ but inside $Q$ such that $s = |B| - |R|$. Assume that $R \cup B \cup \{a, p_1, p_2, \dots p_{s+1}\}$ is in general position.
		\end{sloppypar}
		
		Then there exists a partition of $Q$ into convex polygonal regions $Q_1, \dots, Q_s$ such that $p_ip_{i+1}$ is a diagonal of $Q_i$ and for every $i$, we have $\blue{Q_i} - \red{Q_i} = 1$. Moreover, every point of $R \cup B$ is counted in exactly one $Q_i$.
	\end{lemma}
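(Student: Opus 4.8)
The plan is to prove the lemma by induction on $s$, using Lemma~\ref{chp01:cases3} as the engine of the inductive step. The base case $s=1$ is immediate: take $Q_1:=Q$; both $p_1$ and $p_2=p_{s+1}$ lie on $\partial Q$, so $p_1p_2$ is a diagonal of $Q$, and $\blue{Q_1}-\red{Q_1}=|B|-|R|=1$ since all blue points lie in $P\subseteq Q$ and all red points in $Q$ (the options $p_1=a$, $p_{s+1}=a$ change nothing). For $s=2$ I would also argue directly, by sweeping a chord of $Q$ through $p_2$ whose far endpoint moves along the boundary arc of $Q$ through $a$, and picking, via a discrete intermediate‑value argument, a position for which the piece $Q_1$ containing $p_1$ has $\blue{Q_1}-\red{Q_1}=1$; then the complementary piece $Q_2$ automatically has difference $1$, both pieces are convex, and each contains its chain edge as a diagonal (each being an instance of the $s=1$ case).

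For the inductive step $s\ge 3$ I would choose indices $2\le j<k\le s$, set $n_1=j-1$, $n_2=k-j$, $n_3=s+1-k$ (so $n_1+n_2+n_3=s$ with every $n_i\ge 1$), and apply Lemma~\ref{chp01:cases3} to the triangle $T=(p_1,p_j,p_k)$ inside $Q$ with these numbers. Conditions~\eqref{cp01:conjectureeq} for $T$ are then free: every cyclic sum $\sum_{i\in I}n_i$ is at least $1$, while the corresponding bound $-\red{Q\cap\bigcup_{i\in I}\out{p_ip_{i+1}}}$ is non‑positive. Granting — see the last paragraph — that all blue points lie inside $T$ and all red points outside $T$, Lemma~\ref{chp01:cases3} produces a partition $Q_1,Q_2,Q_3$ of $Q$ with $\blue{Q_i}-\red{Q_i}=n_i$ in which the sides $p_1p_j,\ p_jp_k,\ p_kp_1$ of $T$ are diagonals of $Q_1,Q_2,Q_3$. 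Each $Q_i$ is a sector at the Knaster–Kuratowski–Mazurkiewicz point $y$ (found inside $T$) intersected with the convex set $Q$, hence convex. Using convexity of $(a,p_1,\dots,p_{s+1})$ one checks that $Q_1$ contains the subchain $p_1,\dots,p_j$, that $Q_2$ contains $p_j,\dots,p_k$, and that $Q_3$ contains $p_k,\dots,p_{s+1}$ together with the boundary arc of $Q$ through $a$ and the point $p_1$; in each case the endpoints of the relevant subchain lie on $\partial Q_i$, being the endpoints of the side of $T$ that is a diagonal of $Q_i$.

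Next I would verify that each $Q_i$ is itself an instance of Lemma~\ref{chap01:partiitoninduction} with parameter $n_i<s$. The key point is that $P_i:=P\cap Q_i$ is convex (an intersection of convex sets) and that its boundary is exactly the relevant subchain together with segments running through one extra boundary point of $Q_i$ — namely $y$ for $Q_1$ and $Q_2$ (so $P_1=(y,p_1,\dots,p_j)$, $P_2=(y,p_j,\dots,p_k)$), and $a$ for $Q_3$ (the arc of $\partial Q$ through $a$ being already in place). Thus $P_i$ has precisely the ``chain plus boundary arc'' shape the lemma demands, the blue points of $Q_i$ lie inside $P_i$ while the red points of $Q_i$ lie in $Q_i\setminus P_i$, and $\blue{Q_i}-\red{Q_i}=n_i$ equals the number of chain edges inside $Q_i$; the point playing the role of $a$ in the sub‑instance is $y$ for $Q_1,Q_2$ and $a$ for $Q_3$, the needed half‑plane containment $\prava{\cdot}$ following from convexity of $(a,p_1,\dots,p_{s+1})$ — a subchain of a single edge is just the $s=1$ base case, and the permitted case ``$a$ equals an endpoint'' arises naturally. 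Applying the induction hypothesis to $Q_1,Q_2,Q_3$ and concatenating the three partitions yields a partition of $Q$ into $n_1+n_2+n_3=s$ convex regions, each with $\blue{\cdot}-\red{\cdot}=1$, each containing a distinct chain edge $p_ip_{i+1}$ as a diagonal, and with every point of $R\cup B$ assigned to exactly one part.

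I expect the crux to be the parenthetical assumption above: making Lemma~\ref{chp01:cases3} applicable, i.e.\ arranging that \emph{all} blue points lie inside the chosen triangle $T$ while all red points lie outside it. A naïve sub‑triangle spanned by three chain vertices need not contain every blue point of $P$, so $T$ — or the region to which Lemma~\ref{chp01:cases3} is applied — must be selected with care, exploiting the convex structure of $(a,p_1,\dots,p_{s+1})$, the availability of the boundary point $a$, and possibly a preliminary cut that first isolates the part of $P$ carrying the blue points. Getting this right, together with the lengthy but routine convex‑geometry bookkeeping that each $Q_i$ re‑enters the lemma's hypotheses, is where the real work lies; by contrast Conditions~\eqref{cp01:conjectureeq} cost nothing here because all $n_i$ are positive — it is only in the very first step of the overall argument (with $Q$ the whole plane and no point $a$ at hand) that these conditions, and even the existence of a suitable triangle, become genuinely problematic, as the authors forewarn.
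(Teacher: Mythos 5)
There is a genuine gap, and you have in fact pointed at it yourself: your inductive step only works under the ``parenthetical assumption'' that the chosen triangle $T$ contains \emph{all} blue points in its interior and \emph{all} red points outside, and you leave this unresolved. For $s\ge 4$ no triangle spanned by three vertices of the chain need contain all blue points of $P$ (blue points can sit in $P\setminus T$ arbitrarily), and no ``preliminary cut isolating the blue points'' of the kind you gesture at is available --- any such cut would itself have to balance blue against red counts, which is exactly the problem being solved. So the step where you invoke Lemma~\ref{chp01:cases3} with the all-positive targets $n_1=j-1$, $n_2=k-j$, $n_3=s+1-k$ does not go through: the hypotheses of that lemma are simply not met by your $T$, and the triviality of Conditions~\eqref{cp01:conjectureeq} that you rely on is an artifact of having assumed the hard part away.

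The paper's proof resolves this differently, and the difference is the whole content of the lemma. It takes the triangle $T=ap_jp_{j+1}$ with apex at the boundary point $a$, feeds Lemma~\ref{chp01:cases3} only the blue points $B'=B\cap T$, and compensates for the blue points of $P$ outside $T$ by lowering the targets to $n(ap_j)=j-1-\blue{\prava{p_ja}}$ and $n(p_{j+1}a)=s-j-\blue{\leva{p_{j+1}a}}$; these may be zero or negative, so Conditions~\eqref{cp01:conjectureeq} become a genuine obligation. They are then verified by two devices you do not use: (i) the index $j$ is chosen at the switch between ``left-partitionable'' and ``right-partitionable'' segments $p_ia$ (a discrete intermediate-value argument), which gives exactly $n(ap_j)\ge-\red{Q\cap\prava{p_ja}}$ and the analogous bound at $p_{j+1}$; and (ii) the apex is $a\in\partial Q$, so the wedge $\prava{p_ja}\cap\leva{p_{j+1}a}$ has interior outside $Q$ and contributes no points, which yields the condition for the two-edge interval. (The paper also peels off a single region directly when $p_2a$ is right-partitionable or $p_sa$ is left-partitionable, covering in particular your $s=2$ case.) Your skeleton --- induction plus a three-way split via Lemma~\ref{chp01:cases3} --- matches the paper's, but without the adjusted, possibly negative $n_i$'s and the partitionability dichotomy the inductive step cannot be completed.
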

	
	Note that instead of partitioning the plane as in Theorem~\ref{chp01:partition} we are partitioning a polygonal region $Q$. And instead of a polygon $P$ inside $Q$, we have a chain of vertices attached to the boundary of $Q$ on both ends. It will immediately follows, that after we manage to split the plane into three polygonal regions, we can apply this lemma to prove Theorem~\ref{chp01:partition}.
	
	It is important that the polygonal chain $(p_1, \dots, p_{s+1})$ is attached to the boundary of~$Q$ because Conditions \eqref{cp01:conjectureeq} does not pose a problem in this case as will be clear from the proof.
	
	\begin{proof}[Proof of Lemma~\ref{chap01:partiitoninduction}] 		
		We use induction on $s$.
		
		\begin{sloppypar}
			First assume that $s=1$. In this case, set $Q_1 = Q$ and assign all points of $B\cup R$ on the boundary of $Q$ to $Q_1$. Since $s = |B| - |R|$, we have $\blue{Q_1} - \red{Q_1} = 1$.
		\end{sloppypar}
		
		Now assume that $s\ge2$.
		For every $p_i$, $2\le i \le s$, we say that $p_ia$ is left-partitionable if 
		\[\blue{Q \cap \prava{p_ia}} - \red{Q \cap \prava{p_ia}} \le i-1\]
		and we say that $p_ia$ is right-partitionable if 
		\[\blue{Q \cap \prava{p_ia}} - \red{Q \cap \prava{p_ia}} \ge i-1.\]
		Since $s = |B| - |R|$ and the points $p_i$ together with $a$ are in general position, $p_ia$ is left-partitionable if 
		\[\blue{Q \cap \leva{p_ia}} - \red{Q \cap \leva{p_ia}} \ge s -i +1\]
		and $p_ia$ is right-partitionable if 
		\[\blue{Q \cap \leva{p_ia}} - \red{Q \cap \leva{p_ia}} \le s -i +1.\]
		Moreover, every such $p_ia$ is left or right-partitionable.
		
		Assume that $p_2a$ is right-partitionable. Note that this does not happen if $a=p_1$ because there is no blue point to the right of $p_2p_1$. Let $l$ be a line containing $p_2a$. By rotating $l$ in the clockwise direction around $p_2$ we can find a point $x$ on the boundary of $Q$ between $a$ and $p_1$ different from $p_1$ such that \[\blue{Q \cap \prava{p_2x}} - \red{Q \cap \prava{p_2x}} = 1\] since \[\blue{Q \cap \prava{p_2p_1}} - \red{Q \cap \prava{p_2p_1}} \le 0\] (again note that there are no blue or red points on the line $p_2p_1$ because ${R \cup B \cup \{p_2, p_1\}}$ is in general position).
		Thus, we can set $Q_1 = Q\cap \prava{p_2x}$ and apply induction on the polygonal region $Q \cap \leva{p_2x}$ with $a$ on its boundary, points $p_2,\dots p_{s+1}$, and the remaining red and blue points. Then the partition of $Q \cap \leva{p_2x}$ together with $Q_1$ forms the desired partition of $Q$. See Figure~\ref{fig2.5:inductionstep_basic}.
		
		\begin{figure}[htb]
			\centering
			\includegraphics[scale=1]{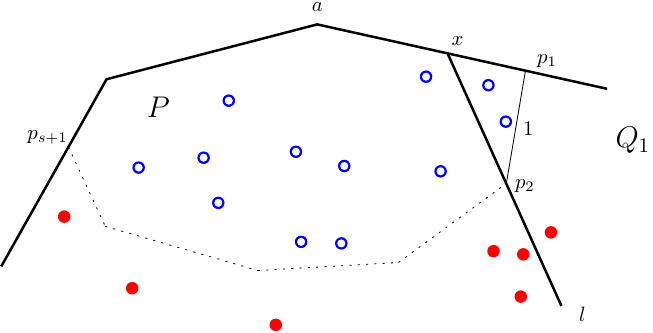}
			\caption[Induction step in Lemma~\ref{chap01:partiitoninduction} when $p_2a$ is right-partitionable.]{Induction step in Lemma~\ref{chap01:partiitoninduction} when $p_2a$ is right-partitionable. Polygon $Q_1$ is the first part of the partition of $Q$. The remaining parts of the partition are obtained by induction hypothesis applied on the remaining part of $Q$.}
			\label{fig2.5:inductionstep_basic}
		\end{figure}
		
		Therefore, we can assume that $p_2a$ is left-partitionable. By similar analysis for $p_sa$ we solve the case when $p_sa$ is left-partitionable. Hence, we can assume that $p_sa$ is right-partitionable.
		
		\begin{sloppypar}
			Thus, we can find index $j$, $2\le j \le s-1$ such that $p_ja$ is left-partitionable and $p_{j+1}a$ is right-partitionable. Let $T$ be the triangle $ap_jp_{j+1}$. Let ${B' = B \cap T}$, ${n(ap_j) = j-1 - \blue{\prava{p_ja}}}$,  ${n(p_{j+1}a) = s -j - \blue{\leva{p_{j+1}a}}}$, and  $n(p_jp_{j+1}) = 1$. We want to use Lemma~\ref{chp01:cases3} on $Q$, triangle $T$ inside $Q$, set of blue points $B'$, set of red points $R$, and the numbers $n(ap_j), n(p_{j+1}a), n(p_jp_{j+1})$.
		\end{sloppypar}
		
		Clearly, $B'\cup R \cup \{a, p_j, p_{j+1}\}$ are in general position, $B'$ is in the interior of $T$, $R$ is outside $T$, and $|B'| - |R| = n(ap_j) + n(p_{j+1}a) + n(p_jp_{j+1})$. See Figure~\ref{fig3:inductionstep} for an illustration. It remains to check that Conditions $\eqref{cp01:conjectureeq}$ hold.
		
		\begin{figure}[!htb]
			\centering
			\includegraphics[scale=1]{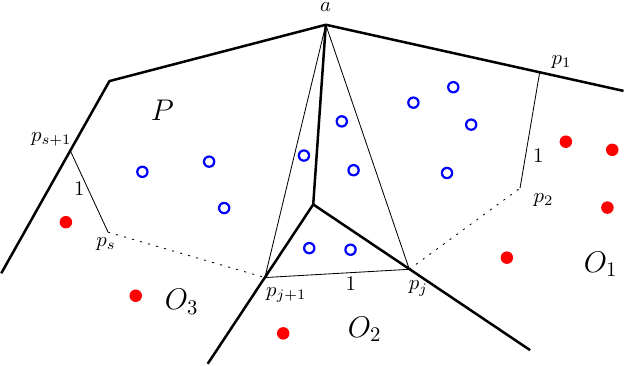}
			\caption[Induction step in Lemma~\ref{chap01:partiitoninduction}.]{Induction step in Lemma~\ref{chap01:partiitoninduction}. Polygonal region is split into three parts $O_1, O_2, O_3$. Induction hypothesis can then be applied on $O_1$ and $O_3$ to obtain a complete partition of the polygonal region.}
			\label{fig3:inductionstep}
		\end{figure}
		
		Let us first check it for the edge $p_ja$. Since $p_j a$ is left-partitionable,
		\begin{align*}
			\blue{Q \cap \prava{p_ja}} - \red{Q \cap \prava{p_ja}} &\le j-1.
		\end{align*}
		
		Thus,
		\[- \red{Q \cap \prava{p_ja}} \le j-1 -\blue{Q \cap \prava{p_ja}} = n(ap_j).\]
		
		\begin{sloppypar}
			Similarly, since $p_{j+1} a$ is right-partitionable, we have $- \red{Q \cap \leva{p_{j+1}a}} \le n(ap_{j+1})$. Additionally, since $a$ in on the boundary of $Q$, the interior of $\leva{p_{j+1}a} \cap \prava{p_{j}a}$ is entirely outside $Q$. Thus, 
			\begin{align*}
				n(ap_{j+1}) + n(ap_{j}) &\ge - \red{Q \cap \prava{p_ja}} - \red{Q \cap \leva{p_{j+1}a}} \\
				&= -\red{Q \cap  (\prava{p_ja} \cup  \leva{p_{j+1}a})}.
			\end{align*}
		\end{sloppypar}

		The remaining conditions follow immediately from these ones since $n(p_jp_{j+1})$ is a positive number.
		Thus, by Lemma~\ref{chp01:cases3}, $Q$ can be partition into convex polygonal regions $O_1, O_2, O_3$ such that $ap_j, p_jp_{j+1}, ap_{j+1}$ are diagonals of $O_1, O_2, O_3$, respectively, and $\bluedashed{O_1} - \red{O_1}, \bluedashed{O_2} - \red{O_2}, \bluedashed{O_3} - \red{O_3}$ are equal to $n(ap_j), n(p_jp_{j+1}), n(ap_{j+1})$, respectively. See Figure~\ref{fig3:inductionstep} for an illustration. Hence,
		
		\begin{align*}
			\blue{O_1} - \red{O_1} &= n(ap_j) + \blue{Q \cap \leva{ap_j}} = j-1 \\
			\blue{O_2} - \red{O_2} &= n(p_jp_{j+1}) + \blue{Q \cap \leva{p_jp_{j+1}}} = 1 \\
			\blue{O_3} - \red{O_3} &= n(ap_{j+1}) + \blue{Q \cap \leva{p_{j+1}a}} = s-j. \\
		\end{align*}
		
		Therefore, we can apply the induction hypothesis to the polygonal region $O_1$, point~$a$ on the boundary of $O_1$, points $p_1, \dots, p_{j}$, the set of blue points $B \cap O_1$, and the set of red points $R \cap O_1$ (if some red or blue point are on the boundary of $O_1$, we include only the ones assigned to $O_1$ by Lemma~\ref{chp01:cases3}) to obtain partition $(Q_1, \dots, Q_{j-1})$, of $O_1$. Similarly, we obtain a partition $(Q_{j+1}, \dots, Q_{s})$ of $O_3$ by induction hypothesis applied to $O_3$ and corresponding points.
		
		Furthermore, since $n(p_jp_{j+1}) = 1$, $p_j,p_{j+1}$ is a diagonal of $O_2$ and we can set $Q_j = O_2$. Finally, $(Q_1, \dots, Q_s)$ is the desired partition of $Q$.
	\end{proof}
	
	It remains to apply this Lemma to prove Theorem~\ref{chp01:partition}. The idea is to do the first step similarly to the induction step of this lemma. The only real difference is that previously we had at least one vertex of $P$ on the boundary of $Q$. Without it, the Conditions \eqref{cp01:conjectureeq} are not guaranteed to hold and we have to be a bit more careful with the first splitting.
	
	\begin{proof}[Proof of Theorem~\ref{chp01:partition}]
		If $s=3$ we can apply Lemma~\ref{chp01:cases3} and we are done. 
		
		For every $p_i$, $2\le i \le s$, we say that $p_ip_1$ is left-partitionable if \[\blue{\leva{p_ip_1}} - \red{\leva{p_ip_1}} \ge s -i +1\] or, equivalently, if \[\blue{\prava{p_ip_1}} - \red{\prava{p_ip_1}} \le i-1.\] This equivalence holds since $s = |B| - |R|$.

		Similarly, we say that $p_ip_1$ is right-partitionable if \[\blue{\prava{p_ip_1}} - \red{\prava{p_ip_1}} \ge i-1\] or, equivalently, if \[\blue{\leva{p_ip_1}} - \red{\leva{p_ip_1}} \le s -i +1.\] Furthermore, every such $p_ip_1$ is left or right-partitionable. 
		
		
		Note that $p_2p_1$ is left-partitionable because it is an edge of $P$, and so there are no blue points to the right of $p_2p_1$. Similarly, $p_sp_1$ is right-partitionable. Therefore, we can find $j$, $2\le j \le s-1$ such that $p_jp_1$ is left-partitionable and $p_{j+1}p_1$ is right-partitionable.
		
		\begin{sloppypar}
			Let $T$ be the triangle $p_1p_jp_{j+1}$. Let $B' = B \cap T$, $n(p_1p_j) = j-1 - \blue{\prava{p_jp_1}}$,  $n(p_{j+1}p_1) = s - j   - \blue{\leva{p_{j+1}p_1}}$, and  $n(p_jp_{j+1}) = 1$. The situation is almost identical as in the proof of Lemma~\ref{chap01:partiitoninduction}. Thus, we would like to use Lemma~\ref{chp01:cases3} on the whole plane, triangle $T$, the set of blue points $B'$, the set of red points $R$, and the numbers $n(p_1p_j), n(p_1p_{j+1}), n(p_jp_{j+1})$. All the assumptions needed in Lemma~\ref{chp01:cases3} are satisfied in the similar way as in the proof of Lemma~\ref{chap01:partiitoninduction} with the exception of the following condition: 
			\begin{align*}
				n(p_1p_{j}) + n(p_1p_{j+1}) &\ge -\red{(\prava{p_jp_1} \cup  \leva{p_{j+1}p_1})}.
			\end{align*}
		\end{sloppypar}

		Hence, if this inequality holds, we can partition the plane into three convex polygons, two of which we can further partition by Lemma~\ref{chap01:partiitoninduction}. In this way, we obtain the desired partition of the plane.
		
		Now assume that 
		\begin{align*}
			n(p_1p_{j}) + n(p_1p_{j+1}) &< -\red{\prava{p_jp_1} \cup  \leva{p_{j+1}p_1}}.
		\end{align*}
		Note that since $p_{j+1}p_1$ is right-partitionable, $p_jp_1$ is left-partitionable, this can happen only if $3\le j \le s-2$ (e.g. in case $j=s-2$, we would have $n(p_{s-1}p_1) = 1$ and the previous inequality would be in contradiction with left-partitionability of $p_{s-2}p_1$).
		
		We can further split the union $\prava{p_jp_1} \cup  \leva{p_{j+1}p_1}$ and write
		\begin{align} \label{chp01:eq1}
			\begin{split}
				n(p_1p_{j}) &+ n(p_1p_{j+1}) < -\red{\prava{p_jp_1} \cap  \prava{p_{j+1}p_1}} -\\&- \red{\leva{p_jp_1} \cap  \leva{p_{j+1}p_1}} 
				-\red{\prava{p_jp_1} \cap  \leva{p_{j+1}p_1}}.
			\end{split}
		\end{align}

		Since $p_jp_1$ is left-partitionable, $\blue{\prava{p_jp_1}} - \red{\prava{p_jp_1}} \le j-1$. By substituting $n(p_1p_j)$ for $j-1 - \blue{\prava{p_jp_1}}$, we have	
		\begin{equation}\label{chp01:eq2}
			n(p_1p_j) \ge -\red{\prava{p_jp_1}}.
		\end{equation}
		
		Similarly, since $p_{j+1}p_1$ is right-partitionable, $n(p_1p_{j+1}) \ge -\red{\leva{p_{j+1},p_1}}$. By combining this with Equation \eqref{chp01:eq1} we get
		\begin{equation}\label{chp01:eq3}
			n(p_1p_j) < -\red{\prava{p_jp_1} \cap  \prava{p_{j+1}p_1}}.
		\end{equation}
		
		Equations \eqref{chp01:eq2} and \eqref{chp01:eq3} imply that there exists a directed half-line $l$ starting at $p_1$ that splits the region $\prava{p_jp_1} \cap  \leva{p_{j+1}p_1} $ in a way that $l$ does not cross any point of $R \cup B$, and
		\begin{equation}\label{chp01:eq4}
			n(p_1p_{j}) = -\red{\prava{p_jp_1} \cap  \prava{l}}. 
		\end{equation}
		
		This together with Equation \eqref{chp01:eq1} implies that
		
		\begin{align}\label{chp01:eq5}
			n(p_1p_{j+1}) < - \red{\leva{l} \cap  \leva{p_{j+1}p_1}}.
		\end{align}
		
		\begin{figure}[htb]
			\centering
			\includegraphics[scale=1]{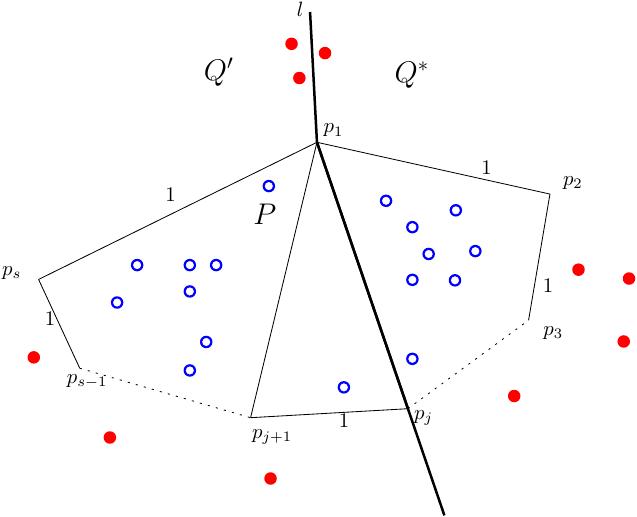}
			\caption{ First step of partitioning the convex polygon $P$.}
			\label{fig4:partitioningproblem}
		\end{figure}
		Therefore, the plane is split into two polygonal regions; convex one $Q^* = \prava{p_jp_1} \cap  \prava{l}$ and non-convex one $Q' = \leva{l} \cup  \leva{p_{j}p_1}$. See Figure~\ref{fig4:partitioningproblem}. If we substitute for the value of $n(p_1p_{j})$ into Equation \eqref{chp01:eq4} we see that  			
		\[j-1 - \blue{\prava{p_jp_1}} = -\red{Q^*}.\] 			
		Furthermore, note that the blue points are only inside $P$. Thus, 
		
		\begin{equation} \label{chp01:eq6}
			j-1 = \blue{Q^*} -\red{Q^*}
		\end{equation}
		
		Hence, we can apply Lemma~\ref{chap01:partiitoninduction} to the polygonal region $Q^*$ with point $p_1$ on its boundary, points $p_1,\dots, p_j$, set of blue points $B \cap Q^*$ and set of red points $R \cap Q^*$. This way we obtain a partition of $Q^*$ into convex polygonal regions $Q_1, \dots, Q_{j-1}$.
		
		Thus, it suffices to partition $Q'$. Equation \eqref{chp01:eq6} together with the fact that $s = |B| - |R|$ implies that $s-j+1 = \blue{Q'} -\red{Q'}$.
		We would like to apply Lemma~\ref{chap01:partiitoninduction} to the polygonal region $Q'$ with point $p_1$ on its boundary, points $p_j,\dots, p_s, p_1$, set of blue points $B \cap Q'$ and set of red points $R \cap Q'$.  The only problem is that $Q'$ is not convex. Luckily, it turns out that it does not matter in this case because if we apply the same approach, the parts of the partition will still be convex. Since the proof is almost the same as in the proof of Lemma~\ref{chap01:partiitoninduction} we present a more concise version.
		
		Let $R' = R \cap Q'$ and $B' = B \cap Q'$. The important part is that Equation \eqref{chp01:eq5} implies that the diagonal $p_{j+1}p_1$ is left-partitionable in $Q'$. Furthermore, $p_s \ne p_{j+1}$ since $s \ge j+2$. Therefore, we once again find some index $k$, $j+1 \le k \le s-1$, such that $p_kp_1$ is left-partition-able and $p_{k+1}p_1$ is right-partitionable. 
		Let $T'$ be the triangle $p_1p_kp_{k+1}$. Let $n(p_1p_k) = k-j - \bluedashed{\prava{p_kp_1}}$,  $n(p_{k+1}p_1) = s - k   - \bluedashed{\leva{p_{k+1}p_1}}$, and  $n(p_kp_{k+1}) = 1$. We apply Lemma~\ref{chp01:cases3} to the whole plane, triangle $T$, set of blue points $B'$, set of red points~$R'$, and the numbers $n(p_1p_k), n(p_1p_{k+1}), n(p_kp_{k+1})$. The Conditions \eqref{cp01:conjectureeq} are satisfied since $p_kp_1$ is left-partition-able, $p_{k+1}p_1$ is right-partitionable, and $\prava{p_kp_1} \cap \leva{p_{k+1}p_1}$ does not contain any point of $B'$ or $R'$ (because its interior is disjoint with $Q'$). Thus, we can partition the plane into convex polygonal regions $O_1, O_2, O_3$ such that $p_1p_k, p_kp_{k+1}, p_1p_{k+1}$ are diagonals of $O_1, O_2, O_3$, respectively, and other conditions about number of blue and red points inside these polygon holds. It is immediate that all intersections $O_i \cap Q'$ are also convex.
		
		Therefore, we can apply Lemma~\ref{chap01:partiitoninduction} to the convex polygonal region $Q'\cap O_1$ with the point $p_1$ on its boundary, points $p_j, \dots, p_{k}$, the set of blue points $B' \cap O_1$, and the set of red points $R' \cap O_1$ (if some red and blue point are on the boundary of $O_1$, we include only the ones assigned to $O_1$ by Lemma~\ref{chp01:cases3}) to obtain a partition $(Q_j, \dots, Q_{k-1})$, of $Q'\cap O_1$. Similarly, we obtain a partition $(Q_{k+1}, \dots, Q_{s+1})$ of $O_3$ by Lemma~\ref{chap01:partiitoninduction} applied to $Q'\cap O_3$ and corresponding points.
		
		Finally, we can set $Q_k = Q'\cap O_2$, and the partition $(Q_1, \dots, Q_s)$ is the desired partition of the plane.

	\end{proof}
	
	\section{Conclusion of the proof}
	
	To finish the proof of Theorem~\ref{mainpolygon}, we use a result proved by \citet{straitline} about point sets with color classes separated by a line. We use a slightly modified version that easily follows from the proof of the original version. For completeness, we include our variant of the proof in Appendix~\ref{AppendixA}. 
	
	\begin{theorem}[\cite{straitline}] \label{chp01:straitline}
		Let $R$ be a set of red points and $B$ be a set of blue points such that $R \cup B$ is in general position. Assume that $|R| -|B| = 1$ and that there are two points $r_1, r_2 \in R$ such that the line $r_1r_2$ separates $R$ from $B$ and that $r_1, r_2$ are vertices of the convex hull $\conv{R\cup B}$. Then there exists an alternating Hamiltonian path on $R \cup B$ with end vertices $r_1, r_2$.
	\end{theorem}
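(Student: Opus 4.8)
The plan is to prove Theorem~\ref{chp01:straitline} by an induction on $|R|+|B|$ that refines the induction behind the unrestricted statement of~\cite{straitline}, carrying the endpoints along. Reorient so that the separating line $\ell=r_1r_2$ is horizontal, all of $B$ lies strictly below $\ell$, all of $R$ lies on or above $\ell$ (so, by general position, $r_1,r_2$ are the only points of $R\cup B$ on $\ell$), and $r_1$ is to the left of $r_2$. With this normalization the hypotheses say exactly that $r_1r_2$ is the edge of $\conv R$ facing $B$ while $r_1,r_2$ are vertices of $\conv{R\cup B}$; and since a line separates the two color classes, the boundary of $\conv{R\cup B}$ is a red arc (namely $\partial\conv R$ with the edge $r_1r_2$ removed) followed by a blue arc, with $r_1$ and $r_2$ precisely the two ends of the red arc. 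Because the inductive step will output a color-balanced sub-instance, I would run the induction together with a companion statement for the case $|R|=|B|$, asserting a non-crossing alternating Hamiltonian path between a red end of the gap and a blue end of the gap; the two statements feed one another, and the symmetric $|B|=|R|+1$ case is just the color reflection of the theorem.

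For the inductive step, let $b^\ast$ be the blue neighbour of $r_1$ on $\partial\conv{R\cup B}$, so that $r_1b^\ast$ is a (bichromatic) edge of the hull. Use $r_1b^\ast$ as the first edge of the path. The remaining set $(R\cup B)\setminus\{r_1\}$ is line-separated and color-balanced, and $b^\ast$ and $r_2$ are still vertices of its convex hull; assuming they occupy the positions required by the companion statement (this is the point that needs care --- see below), invoke it to obtain a non-crossing alternating Hamiltonian path from $b^\ast$ to $r_2$, and prepend $r_1b^\ast$. This introduces no crossing: since $r_1b^\ast$ is a hull edge of $R\cup B$, the line through $r_1b^\ast$ supports $\conv{(R\cup B)\setminus\{r_1\}}$ and meets it only at $b^\ast$, so the new segment touches the recursively built path only at $b^\ast$. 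The companion statement is handled symmetrically --- peel a bichromatic hull edge at its blue endpoint, landing in an $|R|=|B|+1$ instance --- and the base cases ($|B|\le 1$, where $R=\{r_1,r_2\}$ and the path is $r_1,b,r_2$, and the smallest balanced case) are immediate.

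I expect the main obstacle to be the convex-hull bookkeeping. One has to check that after deleting a hull vertex the prescribed endpoints still sit at the right places on the red and blue arcs of the \emph{new} convex hull, even though the deletion can expose previously interior points of either color onto the hull. In other words, the real content is to pin down the exact invariant that the pair of endpoints must satisfy so that it survives the peeling in both the off-balance and the balanced case; once that invariant is identified the rest --- general position, the base cases, and the non-crossing observation above --- is routine. Since this is precisely the mechanism already present in the proof of the unrestricted result of~\cite{straitline}, the endpoint-tracking refinement needed here is minor, which is what "easily follows from the proof of the original version" is encoding.
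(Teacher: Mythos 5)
Your overall architecture (peel one endpoint per step, maintaining an invariant about where the two designated endpoints sit on the hull of what remains) matches the paper's, but the concrete step you commit to is wrong, and the part you defer as ``routine bookkeeping'' is exactly where it breaks. You take the first edge to be the bichromatic \emph{hull edge} $r_1b^\ast$ of $\conv{R\cup B}$. After deleting $r_1$, the point $b^\ast$ need not be an endpoint of either bichromatic hull edge of $\conv{(R\cup B)\setminus\{r_1\}}$ (deleting $r_1$ can expose a previously interior blue point that takes over that role), so no invariant of the form ``the start vertex is a blue end of a gap'' survives your peeling; worse, the choice can be outright fatal. Concretely, take $r_1=(0,1)$, $r_2=(0,-1)$, $r_3=(-10,-0.5)$, $b_1=(1,0.9)$, $b_2=(2,0.85)$, separated by the line $x=0$. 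Here the blue hull neighbour of $r_1$ is $b^\ast=b_2$ (since $b_1$ lies below the line $r_1b_2$ while $b_1$ is not on the hull of the whole set). After removing $r_1$, the only alternating Hamiltonian path on $\{r_2,r_3,b_1,b_2\}$ from $b_2$ to $r_2$ is $b_2,r_3,b_1,r_2$, and its edges $b_2r_3$ and $b_1r_2$ cross (at roughly $(0.91,0.73)$). So once you draw $r_1b_2$, the remaining instance is unsolvable, and no refinement of the companion statement can rescue this particular first move.

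The paper avoids this by \emph{not} using a hull edge of the current set. It orients the separating line vertically and calls the one or two bichromatic hull edges crossing it the top and bottom alternating edges; the invariant is that the current endpoint $p_i$ is a vertex of the top alternating edge of $\conv{S_i\cup\{p_i\}}$ while $r_2$ stays a vertex of the bottom alternating edge of $\conv{S_i}$. The next vertex $p_{i+1}$ is then chosen as the opposite-coloured endpoint of the top alternating edge of $\conv{S_i}$, i.e.\ of the hull of the points \emph{remaining after} $p_i$ is removed. The edge $p_ip_{i+1}$ is generally not a hull edge of $S_i\cup\{p_i\}$, but it lies in $\conv{S_i\cup\{p_i\}}\setminus\conv{S_i}$, which is what gives non-crossing, and the invariant is preserved by construction; $r_2$ is never consumed early because it sits on the bottom alternating edge until only two points remain (and parity forces it to be last). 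In the example above this rule picks $b_1$, not $b_2$, as the second vertex, and produces the valid path $r_1,b_1,r_3,b_2,r_2$. So the missing ingredient in your write-up is not minor endpoint tracking: it is replacing ``peel a hull edge of the current set'' by ``connect to the top alternating edge of the hull of the remaining set.''
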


	It remains to put the pieces together and finish the proof of Theorem~\ref{mainpolygon}. 
	
	\begin{proof}[Proof of Theorem~\ref{mainpolygon}]
		

        We may assume that $|R| = |B|$ and prove that there exists a closed alternating Hamiltonian path, otherwise we could add one point. Let $R' = R\setminus P$. That is, $R'$ contains exactly the points of $R$ that are not vertices of~$P$. Therefore, $s = |B| - |R'|$.

		By Theorem~\ref{chp01:partition} applied on the polygon $P$, the set of blue points $B$ and the set of red points $R'$, there exists a partition of the plane into convex polygonal regions $Q_1, \dots , Q_{s}$ such that for every $i$, the edge $p_ip_{i+1}$ is a diagonal of $Q_i$, and for every $i$, we have $\blue{Q_i} - \reddashed{Q_i} = 1$ (index arithmetic is modulo $s$). Moreover, every point of $R' \cup B$ is counted in exactly one $Q_i$. That is, if a point of $R' \cup B$ is on boundaries of more $Q_i$'s it is assigned to exactly one of them.
		
		\begin{figure}[htb]
			\centering
			\includegraphics[scale=1]{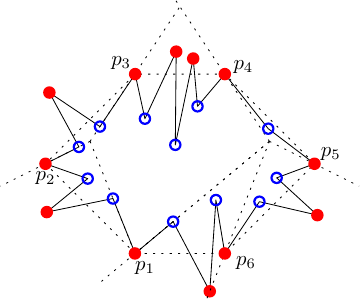}
			\caption[A closed alternating Hamiltonian path.]{A closed alternating Hamiltonian path in a case when $6$ red points form a polygon separating the remaining $6$ red points from $12$ blue points lying inside the polygon.}
			\label{fig6:pathonpolygon}
		\end{figure}
		
		We apply Theorem~\ref{chp01:straitline} to each $Q_i$ separately. Each $Q_i$ contains one more blue point than it contains points of $R'$. Moreover, these red and blue points are separated by line $p_ip_{i+1}$. Additionally, $p_i $ and $ p_{i+1}$ are red points. Since they form a diagonal of the convex $Q_i$, they must be vertices of the convex hull of our red and blue points. Thus, by Theorem~\ref{chp01:straitline} there exists an alternating Hamiltonian path with ends in $p_i$ and $p_{i+1}$ covering all these red and blue points inside $Q_i$. Note that this path is wholly inside the convex polygonal region~$Q_i$ since it consists of straight-line segments connecting points inside a convex polygonal region.

		These paths are connected together in the end vertices $p_i$. Therefore, together they form a closed alternating Hamiltonian path. This cycle is non-crossing since each path is in its own convex polygon $Q_i$, points are in general position, and every point of $R' \cup B$ is assigned to exactly one $Q_i$. See Figure~\ref{fig6:pathonpolygon} for an illustration. 
	\end{proof}

	\bibliographystyle{splncs04nat}
	\bibliography{mybibliography}

	\newpage
	\section*{Appendix}
	\appendix
	
	\section{Proof of Theorem~\ref{chp01:straitline}} \label{AppendixA}
	\begin{proof}
		Denote the separating line $r_1 r_2$ by $s$. By rotating the whole plane, we can assume that $s$ is a vertical line, $R$ is on the left side of $s$, and $B$ is on the right side of $s$. Let $T = R \cup B$. For every subset $X$ of $T$ containing at least one point of each color, there exist one or two alternating edges of $\conv{X}$ intersection $s$. We call the top one \emph{top alternating edge} of $\conv{X}$ and the bottom one \emph{bottom alternating edge} of $\conv{X}$. Since $r_1, r_2$ are vertices of the convex hull of $T$ they are vertices of some alternating edges of $\conv{T}$. Without a loss of generality let $r_1$ be a vertex of the top alternating edge and $r_2$ be a vertex of the bottom alternating edge.
		
		We will inductively build a sequence $(p_1, p_2, \dots, p_{|T|})$ containing all points of $T$ such that connecting these points in the given order by straight-line segments forms an alternating Hamiltonian path. We set $p_1$ to be $r_1$ (it is one of the vertices of the top alternating edge of $\conv{T}$). 
		
		\begin{figure}[htb]
			\centering
			\includegraphics[scale=1]{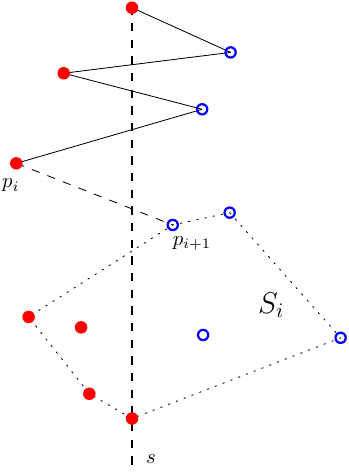}
			\caption[Inductive forming of an alternating Hamiltonian path.]{Inductive forming of an alternating Hamiltonian path when the color classes are separated by a line.}
			\label{fig5:separatedpath}
		\end{figure}
		
		Assume that we have already built the sequence up to $p_i$. Let $S_i$ be the set containing the remaining points. We set $p_{i+1}$ to be the vertex of the top alternating edge of $\conv{S_i}$ colored by a different color than $p_i$. If $|S_i| = 2$ we also set $p_{|T|}$ to be the last unselected point and the path is complete. Clearly, the formed path is alternating. Moreover, in every step the edge $p_ip_{i+1}$ lies in $\conv{S_i \cup \{p_i\}} \setminus \conv{S_i}$ since $p_i$ is a vertex of the top alternating edge of $S_i \cup \{p_i\}$ and $p_{i+1}$ is a vertex of the top alternating edge of $S_i$, see Figure~\ref{fig5:separatedpath} for illustration. Therefore, the formed path is also non-crossing.
		
		It is clear from the construction that the first vertex is a vertex of the top alternating edge of $\conv{T}$. Furthermore, $r_2$ is always a vertex of the bottom alternating edge of $S_i$. Hence, it cannot be part of the top alternating edge of any $S_i$ if there are at least three points left. Since the second to last point has to be from $B$ it means that $r_2$ is the last vertex of our formed path.
	\end{proof}

\end{document}